 \let\mathscr\relax
\newtheorem{theorem}{Theorem}[section]
\newtheorem{proposition}[theorem]{Proposition}
\newtheorem{lemma}[theorem]{Lemma}
\theoremstyle{definition}
\newtheorem{definition}[theorem]{Definition}
\newtheorem{exm}[theorem]{Example}
\theoremstyle{remark}
\newcommand{\ax}{\operatorname{Ax}}
\newcommand{\obj}{\operatorname{Obj}}
\newcommand{\id}{\operatorname{id}}
\newcommand{\Mod}{\textrm{-}\mathcal{M}\!\!\:\mathit{od}}
\newcommand{\lang}{\mathcal{L}}
\newcommand{\fml}{\mathit{Fm}_\mathcal{L}}
\newcommand{\fmlp}{\mathit{Fm}_{\cat{L'}}}
\newcommand{\sfm}{\Sigma_{\cat L}}
\newcommand{\Sfm}{\wp\sfm}
\newcommand{\sfmm}{\Sigma_{\cat{L'}}}
\newcommand{\Sfmm}{\wp\sfmm}
\newcommand{\Sfa}{\wp\Sigma_{\cat{L}_1}}
\newcommand{\Sfb}{\wp\Sigma_{\cat{L}_2}}
\newcommand{\QM}{\mathcal{QM}}
\newcommand{\DS}{\mathcal{DS}}
\newcommand{\SL}{\mathcal{SL}}
\newcommand{\Q}{\mathcal{Q}}
\newcommand{\cat}{\mathcal}
\renewcommand{\AA}{\mathbf{A}}
\newcommand{\Eq}{\mathit{Eq}}
\newcommand{\seq}{\mathit{Seq}}
\renewcommand{\th}{\mathit{Th}}
\newcommand{\V}{\operatorname{Var}}
\newcommand{\VV}{\mathbf{V}}
\renewcommand{\phi}{\varphi}
\renewcommand{\theta}{\vartheta}
\newcommand{\g}{\gamma}
\newcommand{\restr}{\upharpoonright}
\newcommand{\la}{\left\langle}
\newcommand{\ra}{\right\rangle}
\newcommand{\under}{\backslash}
\newcommand{\ost}{{}_\ast/}
\newcommand{\lto}{\longrightarrow}
\newcommand{\lmapsto}{\longmapsto}
\newcommand{\ust}{\under_\ast}
\newcommand{\ov}{\overline}
\newcommand{\tensor}{\otimes}
\def\amslatex\slash{{\protect\AmS-\protect\LaTeX}}
\begin{document} 

\title{The category of propositional deductive systems}

\author{Ciro Russo}
\institution{Departamento de Matem\'atica \\ Universidade Federal da Bahia -- BA, Brazil \\ \small{\texttt{ciro.russo@ufba.br}}}
\maketitle
\date{}

\begin{abstract}
We define the category $\QM$ of quantales and their modules and prove the existence of coproducts, and the existence of pushout and amalgamated coproducts under certain conditions. Then we define the non-full subcategory $\DS_0$ of propositional deductive systems, show that it is equivalent to the one of ``real'' propositional logics whose morphisms are interpretations (modulo a language translation, when needed), and prove that the coproduct in $\DS_0$ is precisely the deductive system called ``logical coproduct'' in \cite{ruslu}. Last, we discuss amalgamation in $\DS_0$.
\end{abstract}

\section{Introduction}

The abstract approach to deductive systems has a long history, from the representation by means of closure operators, which dates back at least to Tarski's work (the reader may refer to \cite{woj} for an account of the approach up to the end of the last century), to the most recent proposals involving ordered algebraic structures \cite{cinmor,blojon,galtsi,rusapal,ruslu} or category theory \cite{arnmarpin18,marpin17}.

Problems like equivalence of logics, interpretation between logical systems, algebraizability, and merging different systems have been treated within such abstract frameworks, generating more or less complicated results and constructions.

The representation of deductive systems by means of quantales, quantale modules, and structural closure operators was first introduced by Galatos and Tsinakis in \cite{galtsi}, and proved to be rather effective with respect to various of the aforementioned problems as well as other ones. Indeed, this paper follows previous works of the author in which Galatos and Tsinakis' approach has been developed quite extensively, and successfully applied to an abstract treatment of translations and interpretations among logical systems \cite{rusapal}, and language expansion and combination of logics \cite{ruslu}.

A categorical approach to the combination of logics appears in \cite{ser1}, where the authors somehow refine Gabbay's fibration \cite{gab96} in a more general and purely syntactical context. Logical coproduct and amalgamation presented in \cite{ruslu} resemble, respectively, unconstrained and constrained fibration constructions of \cite{ser1}. In fact, at the language level they are exactly the same. However, the results of \cite{ruslu}, along with those presented here, show that the introduction of \emph{schema variables} is indeed unnecessary in order to combine different systems. Moreover, while fibration works as a categorical coproduct at the level of languages, this does not seem to be the case for the deductive systems as a whole.

Another categorical approach to various problems concerning the relationships among logics can be found in \cite{arnmarpin18,marpin17}. In those works, category theory is chosen as the primary tool even for the very basic definitions, and the main results are more concerned with algebraizability and extension of logics.

In this work we introduce a category in which essentially all the results obtained so far within this framework can be reviewed as categorical properties or constructions, thus trying to build a bridge between the algebraic/order-theoretic approach and the categorical one.

As a first step, we shall introduce a category, $\QM$, in which the objects are pairs of type $(Q,M)$, where $Q$ is a (unital) quantale and $M$ a left $Q$-module, and morphisms are pairs of maps in which the first coordinate is a quantale morphism and the second one is a quantale module morphism which exists thanks to the quantale morphism. We will discuss coproducts and pushouts in such a category. Then we shall introduce a non-full subcategory of  $\QM$ and show that it is equivalent to the category of propositional logics with interpretations as morphisms. Then we shall prove that the so-called ``logical coproduct'' introduced in \cite{ruslu} is actually the coproduct in this category. Last, we shall discuss the amalgamation of logics having a common fragment.

It is self-evident that the approach to deductive systems by means of quantales and $Q$-modules cannot be used as it is in the more complex and interesting realm of first order logics. However, propositional logics essentially represent the ``deductive skeleton'' of higher order ones and, therefore, knowing how to join together different reasoning systems in a single one opens the possibility of doing it at the first order level (possibly along with specific theories and their models), especially if we know that the new propositional logics are obtained by means of standard categorical procedures.

The paper is structured as follows.

In Section \ref{prel}, after recalling some preliminary notions and results on quantales and their modules, we will define the category $\QM$. In Section \ref{qmcoprodsec} we will study coproducts (Theorem \ref{qmcoprod}), and pushouts and amalgamated coproducts (Theorem \ref{amalgth}) in $\QM$.

The category $\DS_0$ of propositional deductive systems will be introduced in Section \ref{ds0sec}, along with its relationship with actual propositional logics, and a re-examination of the main results of \cite{rusapal} within such a new, refined, framework (Proposition \ref{ds0equiv}). Last, in Section \ref{ds0coprsec}, we will show, in Theorem \ref{ds0copr}, that the coproduct in $\DS_0$ does not coincide with the one of the same objects in $\QM$ and, in fact, is precisely the system constructed in \cite[Section 5]{ruslu}, along with the corresponding maps as morphisms. In the same section, we shall also discuss pushouts and amalgamation in $\DS_0$. It is well-known that monoids do not enjoy the amalgamation property, and this implies the same failure for quantales. Therefore, both the categories $\QM$ and $\DS_0$ do not have every amalgamated coproduct, and the same holds for pushouts. However, we will see a sufficient condition for a V-formation of deductive systems to be amalgamable (Theorem \ref{ds0amalg}), besides comparing the abstract categorical situation with the construction presented in \cite[Section 6]{ruslu}.

 \section{Preliminaries}
\label{prel}

First of all, we recall the definitions of sup-lattice, quantale, quantale module, and quantale module nucleus.

\begin{definition}\label{basic}
A \emph{sup-lattice} is a complete lattice in the category whose morphisms are join-preserving (or, equivalently, residuated) maps.

A \emph{(unital) quantale} is a monoid in the category of sup-lattices. In other words, a quantale is an algebraic structure $\la Q, \bigvee, \cdot, 1 \ra$ such that
\begin{enumerate}[(Q1)]
\item $\la Q, \bigvee \ra$ is a sup-lattice,
\item $\la Q, \cdot, 1 \ra$ is a monoid,
\item the multiplication distributes over arbitrary joins both from the left and from the right.
\end{enumerate}

A homomorphism between two quantales $Q$ and $R$ is a map $f: Q \to R$ which preserves arbitrary joins and the monoid structure. So, equivalently, a quantale homomorphism is a residuated monoid homomorphism. 

Let $Q$ be a quantale. A (left) \emph{$Q$-module} $M$, or a \emph{module over $Q$}, is a sup-lattice $\la M, \bigvee \ra$ endowed with an external binary operation, called \emph{scalar multiplication}, $\cdot: (a,x) \in Q \times M \mapsto a \cdot x \in M$ satisfying usual module axioms. Given two $Q$-modules $M$ and $N$, a $Q$-module morphism $f: M \to N$ is a multiplication-preserving sup-lattice morphism or, equivalently, a multiplication-preserving residuated map.
\end{definition}

\begin{definition}\label{nucleus}
A \emph{(left) $Q$-module nucleus} (or \emph{structural closure operator}) $\g$ over a $Q$-module $M$ is a closure operator -- namely, an extensive, monotone, idempotent map -- such that $a \cdot \g(x) \leq \g(a \cdot x)$, for all $a \in Q$ and $x \in M$. If $\g$ is a nucleus, we will denote by $M_\g$ the $\g$-closed system $\g[M]$, which is a left $Q$-module itself -- a homomorphic image of $M$, in fact -- with the operations defined by ${}^{M_\g}\bigvee X = \g ({}^M\bigvee X)$ and $a \cdot_{M_\g} x = \g(a \cdot_M x)$, for all $X \cup \{x\} \subseteq M_\g$ and $a \in Q$.\footnote{The subscripts and superscripts ``$M_\g$'' and ``$M$'' will never be used again, because the meaning of each operation will be clear from the context.}
\end{definition}
Right modules and bimodules are defined in a completely analogous way. We refer the reader to \cite{krupas,rosenthal,rusjlc,russajl,sol} for further information on quantales and their modules.

The next result is part of Lemma 6.1 of \cite{rusapal}, and is crucial for the definition of the category $\QM$.
\begin{lemma}\label{indmod}
Let $Q$ and $R$ be quantales and $h: Q \to R$ a quantale homomorphism. Then $h$ induces a structure of $Q$-module on every $R$-module $N$, with the following operation
\begin{equation}\label{starh}
\cdot_h: (a, x) \in Q \times N \lmapsto h(a) \cdot x \in N.
\end{equation}
\end{lemma}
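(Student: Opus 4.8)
The plan is to verify directly that the operation $\cdot_h$ defined in \eqref{starh} satisfies all the axioms required of a left $Q$-module structure on $N$. Since $N$ is already a sup-lattice, the only thing to check is that the proposed scalar multiplication interacts correctly with the existing join and with the monoid structure transported from $Q$ via $h$. I would organize the verification around the standard module axioms: distributivity of the scalar action over arbitrary joins in both the module argument and the scalar argument, compatibility with the quantale multiplication (the associativity-type axiom), and the unit law.

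First I would record the properties of $h$ that drive everything: as a quantale homomorphism, $h$ preserves arbitrary joins, preserves the multiplication ($h(a \cdot b) = h(a) \cdot h(b)$), and preserves the unit ($h(1_Q) = 1_R$). Then I would check each axiom by unwinding the definition $a \cdot_h x = h(a) \cdot x$ and pushing the computation onto the known $R$-module structure of $N$. For distributivity over joins of scalars, the key line is
\begin{equation*}
\left( \textstyle\bigvee_i a_i \right) \cdot_h x = h\left( \textstyle\bigvee_i a_i \right) \cdot x = \left( \textstyle\bigvee_i h(a_i) \right) \cdot x = \textstyle\bigvee_i \left( h(a_i) \cdot x \right) = \textstyle\bigvee_i \left( a_i \cdot_h x \right),
\end{equation*}
where the second equality uses join-preservation of $h$ and the third uses that $N$ is an $R$-module. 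Distributivity over joins in the module argument is immediate since $a \cdot_h (\bigvee_j x_j) = h(a) \cdot (\bigvee_j x_j) = \bigvee_j (h(a) \cdot x_j)$.

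For the mixed-associativity axiom I would compute
\begin{equation*}
(a \cdot b) \cdot_h x = h(a \cdot b) \cdot x = \left( h(a) \cdot h(b) \right) \cdot x = h(a) \cdot \left( h(b) \cdot x \right) = a \cdot_h \left( b \cdot_h x \right),
\end{equation*}
using multiplicativity of $h$ together with the associativity axiom of the $R$-module $N$. Finally the unit law follows from $1_Q \cdot_h x = h(1_Q) \cdot x = 1_R \cdot x = x$, using unit-preservation of $h$. Collecting these establishes that $\langle N, \bigvee, \cdot_h \rangle$ is a left $Q$-module.

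I do not expect any genuine obstacle here: the statement is essentially a bookkeeping lemma, and every step reduces the $Q$-action to the given $R$-action through one of the three structure-preservation properties of $h$. The only point requiring a moment's care is ensuring that each module axiom is matched with the correct homomorphism property — join-preservation for the two distributivity laws, multiplicativity for associativity, and unit-preservation for the unit law — so that nothing is silently assumed of $h$ beyond what a quantale homomorphism supplies. Since the verification is routine, the substance of the lemma lies not in its proof but in its role: it is what makes the second coordinate of a $\QM$-morphism meaningful, by letting an $R$-module be regarded as a $Q$-module along $h$.
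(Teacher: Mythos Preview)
Your verification is correct and complete: each module axiom is reduced to the corresponding $R$-module axiom via the appropriate preservation property of $h$, exactly as one would expect. The paper does not actually prove this lemma in-text; it is quoted as part of Lemma~6.1 of \cite{rusapal}, so there is no paper proof to compare against, but the direct axiom check you give is the standard argument and is undoubtedly what underlies the cited result.
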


The process described in the previous lemma is known as the \emph{restriction of scalars along $h$}. In fact, it defines a functor $(\ )_h: R\Mod \to Q\Mod$ between the categories of $R$-modules and $Q$-modules. Such a functor is both adjoint and coadjoint \cite[Theorem 6.7]{rusapal}. The left adjoint is of particular interest for us, as it involves the tensor product of modules, whose construction we recall hereafter.
\begin{theorem}[Theorem 6.3 of \cite{ruscorrapal}]\label{tensormqexists}
Let $M_1$ be a right $Q$-module and $M_2$ a left $Q$-module. Then the tensor product $M_1 \tensor_Q M_2$ of the $Q$-modules $M_1$ and $M_2$ exists. It is, up to isomorphisms, the quotient $\wp(M_1 \times M_2)/\theta_\rho$ of the free sup-lattice generated by $M_1 \times M_2$ by the (sup-lattice) congruence $\theta_\rho$ generated by the set
\begin{equation}\label{R}
\rho = \left\{
	\begin{array}{l}
		\left(\left\{\left(\bigvee X, y\right)\right\}, \bigcup_{x \in X}\{(x,y)\}\right) \\
		\left(\left\{\left(x, \bigvee Y\right)\right\}, \bigcup_{y \in Y}\{(x,y)\}\right) \\
		\left(\{(x \cdot_1 a, y)\}, \{(x,a \cdot_2 y)\}\right) \\
	\end{array} \right\vert
	\left.
	\begin{array}{l}
	X \subseteq M_1, y \in M_2 \\
	Y \subseteq M_2, x \in M_1 \\
	a \in Q \\
	\end{array}
	\right\}.
\end{equation}
\end{theorem}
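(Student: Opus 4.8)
The plan is to show that $T \bydef \wp(M_1 \times M_2)/\theta_\rho$, together with the map $\tau\colon M_1\times M_2\to T$ sending $(x,y)$ to the $\theta_\rho$-class of the singleton $\{(x,y)\}$, solves the universal problem characterising the tensor product. Concretely, writing $x\otimes y\bydef\tau(x,y)$, I want to prove that for every sup-lattice $N$ and every $Q$-balanced bimorphism $\beta\colon M_1\times M_2\to N$ --- i.e.\ a map that preserves arbitrary joins in each coordinate separately and satisfies $\beta(x\cdot_1 a,y)=\beta(x,a\cdot_2 y)$ for all $x\in M_1$, $a\in Q$, $y\in M_2$ --- there is a unique sup-lattice morphism $\bar\beta\colon T\to N$ with $\bar\beta\circ\tau=\beta$.

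The two ingredients I would invoke are standard facts about the category of sup-lattices. First, the free sup-lattice on a set $S$ is the powerset $\wp(S)$ with union as join, the insertion of generators being $s\mapsto\{s\}$; hence every set map $g\colon S\to N$ into a sup-lattice extends to a unique sup-lattice morphism $\hat g\colon\wp(S)\to N$, namely $\hat g(A)=\bigvee_{s\in A}g(s)$. Second, for a sup-lattice congruence $\theta$ on a sup-lattice $L$ the quotient $L/\theta$ is again a sup-lattice (concretely, the system of closed elements of the associated sup-lattice closure operator, cf.\ Definition \ref{nucleus}) and the projection $\pi\colon L\to L/\theta$ is universal among sup-lattice morphisms collapsing $\theta$: any sup-lattice morphism $f\colon L\to N$ whose kernel congruence $\{(a,b):f(a)=f(b)\}$ contains $\theta$ factors uniquely through $\pi$.

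With these at hand the argument has two halves. That $\tau$ is itself a bimorphism is read off directly from $\rho$: for instance $\tau(\bigvee X,y)$ is the class of $\{(\bigvee X,y)\}$, which by the first family of pairs in $\rho$ is $\theta_\rho$-related to $\bigcup_{x\in X}\{(x,y)\}$, and the class of the latter equals $\bigvee_{x\in X}\tau(x,y)$ since $\pi$ preserves joins and union is the join in $\wp(M_1\times M_2)$; join-preservation in the second variable and the balance identity follow in the same way from the other two families. For the universal property, given $\beta$ I would first use freeness to produce the sup-lattice morphism $\hat\beta\colon\wp(M_1\times M_2)\to N$ with $\hat\beta(\{(x,y)\})=\beta(x,y)$, and then show $\hat\beta$ collapses $\theta_\rho$. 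Since the kernel of $\hat\beta$ is a sup-lattice congruence and $\theta_\rho$ is by definition the least sup-lattice congruence containing $\rho$, it is enough to check that $\hat\beta$ identifies the two sides of each generating pair of $\rho$; this reduces exactly to the three defining properties of $\beta$, using that $\hat\beta$ sends unions to joins. The quotient property then yields a unique $\bar\beta$ with $\hat\beta=\bar\beta\circ\pi$, one verifies $\bar\beta\circ\tau=\beta$ on singletons, and uniqueness is automatic because the elements $\tau(x,y)$ join-generate $T$ (the singletons generate $\wp(M_1\times M_2)$ and $\pi$ is a surjective sup-lattice morphism).

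The only non-formal point --- and hence the main obstacle --- is the passage from the generating set $\rho$ to the full congruence $\theta_\rho$: one must be sure that the kernel of a sup-lattice morphism is a genuine sup-lattice congruence, so that containing $\rho$ forces it to contain $\theta_\rho$. Everything else is a mechanical unwinding of the universal properties of $\wp(M_1\times M_2)$ and of the sup-lattice quotient; the real content of the statement is precisely that the three families in $\rho$ encode bilinearity together with join-preservation and $Q$-balance, and impose nothing more.
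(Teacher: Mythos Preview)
The paper does not prove this theorem; it is merely recalled from \cite{ruscorrapal} as a preliminary result, so there is no in-paper argument to compare against. Your proposal is correct and is exactly the standard proof: exhibit the quotient as the universal recipient of $Q$-balanced bimorphisms by combining the universal property of the free sup-lattice $\wp(M_1\times M_2)$ with that of the sup-lattice quotient by $\theta_\rho$. The point you flag as the ``only non-formal'' step --- that the kernel of a sup-lattice morphism is a sup-lattice congruence --- is immediate from the fact that sup-lattice morphisms preserve arbitrary joins, so nothing is missing.
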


Last, we recall that, given a quantale $Q$, a $Q$-module $M$, and $\vartheta \subseteq M^2$, an element $s$ of $M$ is called \emph{$\vartheta$-saturated} if, for all $(v,w) \in \vartheta$ and $a \in Q$, the following condition holds:
\begin{equation}\label{sateq}
av \leq s \iff aw \leq s;
\end{equation}
the quotient of $M$ over the congruence generated by $\vartheta$ is isomorphic to the set of the $\vartheta$-saturated elements of $M$ with suitable operations (see \cite{russajl}).


We conclude this section by defining the category of quantales and their modules.
\begin{definition}\label{qmcat}
The \emph{category of quantales and their modules}, denoted by $\QM$, is defined as follows:
\begin{itemize}
\item $\obj\QM$ (most often denoted simply by $\QM$, with an abuse of notation) is the class of pairs $(Q,M)$, where $Q$ is a unital quantale and $M$ a left $Q$-module;
\item for any two objects $(Q,M),(R,N) \in \QM$, a morphism between them is a pair $(h,f)$, where $h: Q \to R$ is a unital quantale homomorphism and $f: M \to N_h$ a $Q$-module homomorphism, $N_h$ being the $Q$-module structure on $N$ induced by $h$, as in Lemma \ref{indmod};
\item the identity morphisms are defined obviously as pairs of identities, and the composition of two morphisms $(h,f): (Q,M) \to (R,N)$ and $(k,g): (R,N) \to (S,P)$ is the morphism $(k \circ h, g \circ f): (Q,M) \to (S,P)$, each component of the pair being just the composition of functions.
\end{itemize}
\end{definition}


\section{Coproducts and pushouts in $\QM$}
\label{qmcoprodsec}

The category $\QM$ can be seen also as a non-full subcategory of the product category $\cat Q \times \SL$, of the category of quantales with the one of sup-lattices. However, due to the strong condition imposed on morphisms, it is not surprising that $\QM$ has a quite different behaviour with respect to $\cat Q \times \SL$. In this paper, we introduced this category as a (not too) broader abstract framework for the category of deductive systems. For this reason, in this paper we will concentrate our attention to amalgamation and coproduct, which are particularly relevant to logic. The coproduct of unital quantales has been described in \cite{liang}. It is the quotient of the powerset of the coproduct of the monoid reducts of the given quantales, over the smallest congruence that identifies subsets of the factors with their join in the respective quantales. The product of quantales is given simply by the Cartesian product with coordinatewise operations. Regarding sup-lattices and quantale modules, they have analogous products, while coproducts have exactly the same objects as products \cite[Lemma 5.1]{galtsi}. 

The coproduct in the category $\QM$ is strongly connected to the tensor product of quantale modules discussed in \cite{rusapal} and \cite{ruscorrapal}, as we can see from the following characterization.


\begin{theorem}\label{qmcoprod} 
Let $F = \{(Q_i,M_i)\}_{i \in I}$ be a family of objects of $\QM$ and let $R = \coprod_{i \in I} Q_i$ be the coproduct of the $Q_i$'s in the category of unital quantales (see also \cite{liang}). Then the coproduct of $F$ in $\QM$ is the object
$$\left(R, \coprod_{i \in I} R \tensor_{Q_i} M_i\right),$$
with associated family of morphisms $\{(\nu_i,\mu_i (1 \tensor \iota_{M_i}))\}$ where, for each $i \in I$, $\nu_i$ is the embedding of $Q_i$ in $R$, $1 \tensor \iota_{M_i}$ is the $Q_i$-module morphism defined by $x \in M_i \mapsto 1 \tensor x \in R \tensor_{Q_i} M_i$ as in \cite[Section 6]{rusapal}, and $\mu_i$ is the $R$-module embedding of $R \tensor_{Q_i} M_i$ in the $R$-module coproduct of the family $\{R \tensor_{Q_i} M_i\}_{i \in I}$ as in \cite[Proposition 4.13]{rusjlc}.
\end{theorem}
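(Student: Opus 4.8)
The plan is to verify the universal property of the coproduct directly, exploiting two ingredients recalled in Section~\ref{prel}: the coproduct of unital quantales (which supplies the first coordinate) and the adjunction between restriction and extension of scalars (which supplies the second). Fix a test object $(S,P)$ together with a family of morphisms $(h_i,f_i)\colon (Q_i,M_i)\to(S,P)$; I must produce a unique $\QM$-morphism $(h,f)$ from the candidate object that makes all triangles commute.

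First I would construct the quantale coordinate. Since $R=\coprod_{i\in I}Q_i$ is the coproduct in the category of unital quantales (see \cite{liang}), the family $\{h_i\colon Q_i\to S\}$ factors uniquely through $R$: there is a unique unital quantale homomorphism $h\colon R\to S$ with $h\circ\nu_i=h_i$ for every $i$. This already determines the first coordinate and settles its uniqueness.

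For the module coordinate I would proceed in two stages. The key piece of bookkeeping is that restriction of scalars is functorial, so $(P_h)_{\nu_i}=P_{h\circ\nu_i}=P_{h_i}$; hence each $f_i\colon M_i\to P_{h_i}$ is precisely a $Q_i$-module homomorphism into $(P_h)_{\nu_i}$. Now I invoke the extension-of-scalars adjunction $R\tensor_{Q_i}(-)\dashv(\ )_{\nu_i}$ from \cite[Theorem 6.7]{rusapal}, whose unit at $M_i$ is exactly $1\tensor\iota_{M_i}$: it yields, for each $i$, a unique $R$-module homomorphism $g_i\colon R\tensor_{Q_i}M_i\to P_h$ with $g_i(1\tensor x)=f_i(x)$ for all $x\in M_i$. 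Finally, since $\coprod_{i\in I}R\tensor_{Q_i}M_i$ is the coproduct of the $R$-modules $R\tensor_{Q_i}M_i$ with injections $\mu_i$ \cite[Proposition 4.13]{rusjlc}, the family $\{g_i\}$ factors uniquely as $f\circ\mu_i=g_i$ through a single $R$-module homomorphism $f\colon\coprod_{i\in I}R\tensor_{Q_i}M_i\to P_h$.

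It then remains to check that $(h,f)$ is a $\QM$-morphism realizing the required factorization, and that it is the only one. The pair is a morphism by construction, $h$ being a unital quantale homomorphism and $f$ an $R$-module homomorphism into $P_h$. Unwinding the definition of composition in $\QM$ gives $h\circ\nu_i=h_i$ together with $f\circ\mu_i\circ(1\tensor\iota_{M_i})=g_i\circ(1\tensor\iota_{M_i})=f_i$, as wanted. Uniqueness is inherited stagewise: any competitor $(h',f')$ has $h'=h$ by the quantale coproduct, and writing $g_i'=f'\circ\mu_i$ the identity $g_i'(1\tensor x)=f_i(x)$ forces $g_i'=g_i$ by uniqueness in the adjunction, whence $f'\circ\mu_i=f\circ\mu_i$ for all $i$ and therefore $f'=f$ by uniqueness in the module coproduct. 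The only genuinely delicate point, and the one I would treat most carefully, is the compatibility of the several restriction-of-scalars structures over the different base quantales $Q_i$, $R$, and $S$ --- in particular, ensuring that the map $f$ assembled through the $R$-module machinery really lands in $P_h$ and is $R$-linear, rather than being merely a sup-lattice morphism, since every commutation above is an identity of module homomorphisms over a shifting base.
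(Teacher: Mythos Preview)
Your proof is correct and follows essentially the same approach as the paper: both use the quantale coproduct for the first coordinate, then obtain the module coordinate by transporting each $f_i$ through the extension-of-scalars adjunction $R\tensor_{Q_i}(-)\dashv(\ )_{\nu_i}$ and assembling the resulting $R$-module maps via the $R$-module coproduct. The only differences are cosmetic---the paper unpacks the adjunction as a chain of hom-set isomorphisms rather than invoking it by name, and you address uniqueness explicitly while the paper leaves it implicit.
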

\begin{proof}
Let $(S,N) \in \QM$ and $\{(h_i,f_i): (Q_i, M_i) \to (S,N)\}$ be a family of morphisms. Since $R$ is the quantale coproduct of the $Q_i$'s, there exists a quantale morphism $h: R \to S$ such that $h\nu_i=h_i$ for all $i \in I$.

By \cite[Lemma 6.6]{rusapal}, the sup-lattices $\hom_R(R,N_h)$ and $N$ are isomorphic; moreover, such an isomorphism becomes a $Q_i$-module isomorphism between the structure of $Q_i$-module on $\hom_R(R,N_h)$ defined in \cite[Lemma 6.4]{rusapal} (here $R$ is to be thought of as an $R$-$Q_i$-bimodule) and $(N_h)_{\nu_i} = N_{h_i}$. Consequently, we have $\hom_{Q_i}(M_i,N_{h_i}) \cong_{\SL} \hom_{Q_i}(M_i, \hom_R(R,N_h))$. The latter, in turn, is isomorphic to $\hom_R(R \tensor_{Q_i} M_i, N_h)$ by \cite[Lemma 6.5]{rusapal}. Therefore we have $\hom_{Q_i}(M_i,N_{h_i}) \cong_{\SL} \hom_R(R \tensor_{Q_i} M_i, N_h)$.

Then we have an $R$-module morphism $f_i': R \tensor_{Q_i} M_i \to N_h$ canonically associated to each $Q_i$-module morphism $f_i: M_i \to N_{h_i}$. The $R$-module $M = \coprod_{i \in I} R \tensor_{Q_i} M_i$ is the coproduct of the family $\{R \tensor_{Q_i} M_i\}_{i \in I}$ in $R\Mod$, hence there exists an $R$-module morphism $f: M \to N_h$ such that $f'\mu_i = f_i'$ for all $i$.

Taking a closer look at all those isomorphisms (the proof of \cite[Lemma 6.5]{rusapal} was actually presented in \cite[Theorem 4.7.6]{rusthesis}), it is not hard to see that, for all $i \in I$, $r \in R$, and $x \in M_i$, $f_i'(r \tensor x) = r \cdot f_i(x)$, whence
$$(f\mu_i(1 \tensor \iota_{M_i}))(x) = f_i'(1 \tensor x) = f_i(x),$$
which completes the proof.
$$\begin{tikzcd}
R \arrow[rrd, "h"]  &   & &  & M  \arrow[rrd, "f"] & &   \\
                                          & & S &  & R \tensor_{Q_i} M_i \arrow[u, "\mu_i"] \arrow[rr, "f_i'"]  & & N \\
Q_i \arrow[uu, "\nu_i"] \arrow[rru, "h_i"'] &  & &  & M_i \arrow[u, "1 \tensor \iota_{M_i}"] \arrow[rru, "f_i"']     &  &
\end{tikzcd}$$
\end{proof}

\begin{proposition}\label{tensoremb}
Let $h: Q \to R$ be a quantale morphism and $M \in Q\Mod$. If there exists an embedding $f: M \to N_h$ for some $N \in R\Mod$, then the $Q$-module homomorphism
$$\mu: x \in M \mapsto 1 \tensor x \in R \tensor_Q M$$
is an embedding.
\end{proposition}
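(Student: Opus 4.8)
The plan is to obtain the statement from the adjunction between extension and restriction of scalars, of which $\mu$ is the unit, by exploiting the general principle that the first factor of an embedding is itself an embedding. Recall first that, since the morphisms of $\SL$ (and hence of $Q\Mod$) are join-preserving, a $Q$-module morphism is an embedding exactly when it is injective: indeed, if $g$ preserves joins and $g(x) \leq g(y)$, then $g(x \vee y) = g(x) \vee g(y) = g(y)$, so injectivity forces $x \vee y = y$, i.e. $x \leq y$, and $g$ automatically reflects the order. Thus it will be enough to show that $\mu$ is injective.

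The crux is to factor $f$ through $\mu$. This is already essentially available from the proof of Theorem \ref{qmcoprod}: the isomorphism $\hom_Q(M, N_h) \cong_{\SL} \hom_R(R \tensor_Q M, N)$ of \cite[Lemma 6.5]{rusapal} associates to the $Q$-module embedding $f \colon M \to N_h$ an $R$-module morphism $f' \colon R \tensor_Q M \to N$ satisfying $f'(1 \tensor x) = f(x)$ for every $x \in M$. Writing $g$ for the restriction of scalars $(f')_h$, which is a $Q$-module morphism from $(R \tensor_Q M)_h$ to $N_h$, this says precisely that $g \circ \mu = f$ as $Q$-module (in particular, order-preserving) maps.

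The conclusion is now immediate. Since the composite $g \circ \mu = f$ is injective, so is its first factor $\mu$: if $\mu(x) = \mu(y)$, then $f(x) = g(\mu(x)) = g(\mu(y)) = f(y)$, whence $x = y$ because $f$ is an embedding. As $\mu$ is a join-preserving $Q$-module morphism, its injectivity makes it an embedding by the observation above. I do not expect a genuine obstacle here; the entire content lies in the factorization $f = g \circ \mu$ supplied by the adjunction, after which the argument is formal. The only point deserving a little care is the bookkeeping of the restriction of scalars, so that the identity $g \circ \mu = f$ is read as an equality of $Q$-module maps, $f'$ itself being merely an $R$-module morphism.
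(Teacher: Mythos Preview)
Your proof is correct, and it is more conceptual than the paper's own argument. You exploit the adjunction $R \tensor_Q (-) \dashv (\ )_h$ directly: the embedding $f$ factors through the unit $\mu$ as $f = (f')_h \circ \mu$, and injectivity of the composite forces injectivity of $\mu$; the opening remark that injective sup-lattice morphisms are automatically order-embeddings then finishes the job.

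The paper instead works concretely with the description of $R \tensor_Q M$ as the set of $\rho$-saturated subsets of $\wp(R \times M)$ (Theorem~\ref{tensormqexists} together with the recalled characterization of quotients via saturated elements). For each $x \in M$ it defines $\ov x = \{(r,y) : r \cdot f(y) \leq f(x)\}$, verifies by a case analysis over the three generator types of $\rho$ that $\ov x$ is saturated, and then observes that $(1,x) \in \ov x$ forces the assignment $x \mapsto \ov x$ to separate points. The two arguments have the same substance: your $f'$ is given by $r \tensor y \mapsto r \cdot f(y)$, and the paper's $\ov x$ is nothing but $f'_*(f(x))$ read as a saturated subset, so the paper is effectively reconstructing the residual of $f'$ by hand. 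Your route is shorter and makes transparent that the result is a purely formal consequence of the adjunction (indeed, the identity $f'(1 \tensor x) = f(x)$ you use already appears in the proof of Theorem~\ref{qmcoprod}); the paper's route is more self-contained, not depending on the precise shape of the hom--tensor isomorphism from \cite{rusapal}.
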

\begin{proof}
Let us use the following notations: $M' = f[M]$ and $\top' = \bigvee M'$.

Let, for all $x \in M$, $\ov x$ be the following element of $\wp(R \times M)$:
\begin{equation}\label{ovx}
\ov x = \{(r, y) \in R \times M \mid 
r \cdot f(y) \leq f(x)\}.
\end{equation}

Recalling that sup-lattices can be seen also as modules over the two-element quantale $\{\bot,1\}$, we shall prove that, for each $x \in M$, $\ov x$ is a saturated element of the relation defined as in (\ref{R}) by proving that condition (\ref{sateq}) is verified for all the pair types in (\ref{R}). The scalar $a$ in (\ref{sateq}) shall be dropped because it is actually $1$, while the case of $a = \bot$ is trivial. 

Let us check (\ref{sateq}) for the first type of pairs in (\ref{R}). Let $A \subseteq R$ and $y \in M$; then
$$\begin{array}{l}
\{(\bigvee A, y)\} \subseteq \ov x \\
\iff 
\bigvee A \cdot f(y) \leq f(x) \\
\iff 
r \cdot f(y) \leq f(x) \\
\iff \bigcup_{r \in A}\{(r, y)\} \subseteq \ov x.
\end{array}.$$

As for the second type, let $r \in R$ and $Y \subseteq M$. Then we have:
$$\begin{array}{l}
\{(r,\bigvee Y)\} \subseteq \ov x \\
\iff 
r \cdot f(\bigvee Y) = \bigvee\limits_{y \in Y} r \cdot f(y) \leq f(x) 
\\
\iff 
\forall y \in Y \ (r \cdot f(y) \leq f(x))\\
\iff \forall y \in Y \ (\{(r, y)\}  \subseteq \ov x)\\
\iff \bigcup_{y \in Y}\{(r, y)\}  \subseteq \ov x.
\end{array}$$

Last, let us consider $r \in R$, $q \in Q$, and $y \in M$. 
The equality $r \cdot f(q \cdot y) = (r \cdot h(q))\cdot f(y)$ implies that $r \cdot f(q \cdot y) \leq f(x)$ if and only if $(r \cdot h(q))\cdot f(y) \leq f(x)$, and therefore $\{r, q \cdot y)\} \subseteq \ov x$ if and only if $\{(r \cdot h(q), y)\} \subseteq \ov x$. This finally proves that $\ov x$ is saturated in $R \times M$ with respect to the relation which determines the tensor product.

As a final step, let us show that the mapping $x \mapsto \ov x$ is injective. For all $x \in M$, $(1,x) \in \ov x$ and, for any $y \in M$, if $\ov x = \ov y$ then $(1,x) \in \ov y$ and $(1,y) \in \ov x$, which implies that $1 \cdot f(x) = f(x) \leq f(y)$ and $1 \cdot f(y) = f(y) \leq f(x)$, i.e., $f(x) = f(y)$. Then, by the injectivity of $f$, $x = y$ and the thesis follows.

\end{proof}

\begin{theorem}\label{amalgth}
Let $\mathbf A = (Q_1, M_1) \stackrel{(h_1,f_1)}{\longleftarrow} (Q,M) \stackrel{(h_2,f_2)}{\longrightarrow} (Q_2,M_2)$ be a span in $\QM$ and $R$ a quantale such that
$$\mathbf P: \quad \begin{tikzcd}
                                       & Q_1 \arrow[rd, "k_1"]  &   \\
Q \arrow[ru, "h_1"] \arrow[rd, "h_2"'] &                        & R \\
                                       & Q_2 \arrow[ru, "k_2"'] &  
\end{tikzcd}$$
is a pushout square in the category of quantales. If, for $i=1,2$, $M_i$ embeds in $(P_i)_{k_i}$ as a $Q_i$-module, for some $R$-module $P_i$, then the pushout of $\mathbf A$ exists in $\QM$ and it is, up to isomorphism, $(R,R \tensor_Q N)$, where $N$ is the pushout of the $Q$-module span $(M_1)_{h_1} \stackrel{f_1}{\longleftarrow} M \stackrel{f_2}{\longrightarrow} (M_2)_{h_2}$.
 
Moreover, if $\mathbf P$ is the diagram of an amalgamated coproduct (i.e. the morphisms in $\mathbf P$ are embeddings), and $f_1$ and $f_2$ are embeddings, then $(R,R \tensor_Q N)$ is the amalgamated coproduct of $\mathbf A$ in $\QM$.
\end{theorem}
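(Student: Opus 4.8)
The plan is to check the universal property of the pushout by treating the two coordinates separately and stitching them together with the restriction/extension–of–scalars adjunction $R\tensor_Q(-)\dashv(-)_h$, in close analogy with the proof of Theorem \ref{qmcoprod}. Throughout write $h=k_1h_1=k_2h_2\colon Q\to R$ for the diagonal of the pushout square $\mathbf P$, let $g_i\colon (M_i)_{h_i}\to N$ denote the coprojections of the $Q$-module pushout $N$, and let $N_R$ be the pushout, taken in $R\Mod$, of the \emph{extended} span $R\tensor_{Q_1}M_1 \longleftarrow R\tensor_Q M \longrightarrow R\tensor_{Q_2}M_2$, with $R$-module coprojections $j_i\colon R\tensor_{Q_i}M_i\to N_R$.

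First I would fix the candidate cocone. Its quantale components are the maps $k_i$ of $\mathbf P$, and its module components are the $Q_i$-module morphisms $\phi_i\colon M_i\to (N_R)_{k_i}$ obtained by composing the units $\eta_i\colon x\in M_i\mapsto 1\tensor x\in (R\tensor_{Q_i}M_i)_{k_i}$ with the restrictions along $k_i$ of the coprojections $j_i$. Commutativity of the square, namely $\phi_1f_1=\phi_2f_2$, then follows by unravelling the adjunctions from the defining identity of $N_R$, which in turn comes from $f_1,f_2$. Reasoning exactly as for coproducts, the pair $(R,N_R)$ with the morphisms $(k_i,\phi_i)$ is the pushout of $\mathbf A$ in $\QM$: given any competing cocone $(S,T)$ with $(\ell_i,u_i)\colon(Q_i,M_i)\to(S,T)$ satisfying $(\ell_1,u_1)(h_1,f_1)=(\ell_2,u_2)(h_2,f_2)$, the universal property of the quantale pushout yields a unique $\ell\colon R\to S$ with $\ell k_i=\ell_i$; each $u_i$ restricts along $h_i$ to a $Q$-module map $(M_i)_{h_i}\to(T_\ell)_h$, the two restrictions agree after precomposition with $f_i$, so the universal property of $N_R$ (equivalently, of $N$ together with the adjunction) produces a unique $R$-module map $u\colon N_R\to T_\ell$, and $(\ell,u)$ is the required mediating morphism. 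Uniqueness is inherited from that of $\ell$, of the module mediator, and of the adjoint transposes.

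The real content, and the step I expect to be the main obstacle, is the identification $N_R\cong R\tensor_Q N$. Since $R\tensor_Q(-)$ is a left adjoint it preserves the $Q$-module pushout, so $R\tensor_Q N$ is the pushout in $R\Mod$ of $R\tensor_Q(M_1)_{h_1}\longleftarrow R\tensor_Q M\longrightarrow R\tensor_Q(M_2)_{h_2}$; comparing this with $N_R$ thus amounts to comparing, for each $i$, the extension of $M_i$ along $k_i$ with the extension of $(M_i)_{h_i}$ along $h$. There is a canonical comparison $R\tensor_Q(M_i)_{h_i}\to R\tensor_{Q_i}M_i$, and the hypothesis that $M_i$ embeds into some $(P_i)_{k_i}$ is precisely what makes this comparison harmless: by Proposition \ref{tensoremb} it forces the units $\eta_i\colon M_i\to R\tensor_{Q_i}M_i$ to be embeddings, and I would use the $\vartheta$-saturated-element description of the tensor products (as in the proof of Proposition \ref{tensoremb}) to show that the induced map between the two pushouts is an isomorphism. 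It is here that the embedding hypothesis is indispensable.

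For the amalgamation statement I would add the hypotheses that $h_i,k_i$ and $f_1,f_2$ are embeddings and show that the coprojections $(k_i,\phi_i)$ are then monomorphisms in $\QM$. The quantale components $k_i$ are embeddings because $\mathbf P$ is an amalgamated coproduct, and Proposition \ref{tensoremb} already gives that each $\eta_i\colon M_i\to R\tensor_{Q_i}M_i$ is an embedding; what remains is that the module coprojections $j_i$ of the pushout $N_R$ stay injective, i.e. the module-level amalgamation. I would obtain this by realizing $N_R$ inside a single $R$-module assembled from $P_1$ and $P_2$ — concretely a suitable pullback of $P_1$ and $P_2$ over a common $R$-module extension — the embeddings $f_1,f_2$ guaranteeing that the two images of $M$ are glued without identifying distinct elements. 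Combining the injectivity of $k_i$ and of $\phi_i=(j_i)_{k_i}\circ\eta_i$ then shows that $(R,R\tensor_Q N)$ is the amalgamated coproduct of $\mathbf A$.
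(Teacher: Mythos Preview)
Your plan is essentially the paper's own argument, organised slightly differently. The paper also splits the two coordinates, takes the pushout of the $R$-module span $R\tensor_{Q_1}M_1\leftarrow R\tensor_Q M\to R\tensor_{Q_2}M_2$ (your $N_R$), and then identifies it with $R\tensor_Q N$ via the extension/restriction adjunctions---it phrases the latter as a chain of hom-set isomorphisms culminating in $\hom_R(R\tensor_Q M,-)\cong\hom_R(R\tensor_{Q_i}(Q_i\tensor_Q M),-)$ rather than working with the comparison $R\tensor_Q(M_i)_{h_i}\to R\tensor_{Q_i}M_i$ directly, but the content is the same, and Proposition~\ref{tensoremb} is invoked at exactly the point you single out as ``the main obstacle.'' For the amalgamation clause the paper simply asserts that under the extra hypotheses the maps $n_1,n_2,1\tensor\iota_M,1\tensor\iota_N$ are embeddings; your idea of exhibiting $N_R$ inside an $R$-module built from $P_1,P_2$ is a more explicit route to the same conclusion but is not something the paper carries out.
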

\begin{proof}
First of all, let us observe that, for $i = 1, 2$, the following sup-lattice isomorphisms hold:
$$\begin{array}{ll}
\hom_R(R \tensor_Q M, R \tensor_{Q_i} M_i) &  \\
\cong \hom_Q(M, \hom_R(R, R \tensor_{Q_i} M_i)) & \text{(by \cite[Theorem 4.7.6]{rusthesis})} \\
\cong \hom_Q(M, R \tensor_{Q_i} M_i)  &  \text{(by \cite[Theorem 4.7.8]{rusthesis})} \\
\cong \hom_Q(M, \hom_{Q_i}(Q_i, R \tensor_{Q_i} M_i)) & \text{(by \cite[Theorem 4.7.8]{rusthesis})} \\
\cong \hom_{Q_i}(Q_i \tensor_Q M, R \tensor_{Q_i} M_i) & \text{(by \cite[Theorem 4.7.6]{rusthesis})} \\
\cong \hom_{Q_i}(Q_i \tensor_Q M, \hom_R(R, R \tensor_{Q_i} M_i)) & \text{(by \cite[Theorem 4.7.8]{rusthesis})} \\
\cong \hom_R(R \tensor_{Q_i}(Q_i \tensor_Q M), R \tensor_{Q_i} M_i) & \text{(by \cite[Theorem 4.7.6]{rusthesis})}
\end{array}$$

Let $N$ be the pushout of the $Q$-module span
$$(M_1)_{h_1} \stackrel{f_1}{\longleftarrow} M \stackrel{f_2}{\longrightarrow} (M_2)_{h_2},$$
with associated morphisms $n_i: (M_i)_{h_i} \to N$, $i=1,2$, and let $f_i': R \tensor_Q M \to R \tensor_{Q_i} M_i$, $i = 1,2$, be the morphisms obtained via the above correspondence from the image of the morphism $f_i$ under the functor $R \tensor_{Q_i} (Q_i \tensor_Q {}_{\text{--}})$ or, which is the same, the image of $f_i$ under the left adjoint to the functor $( \ )_{k_ih_i} = ( \ )_{h_i} \circ ( \ )_{k_i}$. 

Since $R$ is the pushout of $h_1$ and $h_2$, whenever we have a pair of quantale morphisms $l_i: Q_i \to S$ such that $l_1h_1 = l_2h_2$, there exists a quantale morphism $k: R \to S$ such that $kk_i=l_i$, $i = 1,2$, as in the following diagram:
\begin{equation}\label{push1}
\begin{tikzcd}
                                       & Q_1 \arrow[rd, "k_1"] \arrow[rrd, "l_1", bend left]    &                  &   \\
Q \arrow[rd, "h_2"'] \arrow[ru, "h_1"] &                                                        & R \arrow[r, "k"] & S \\
                                       & Q_2 \arrow[ru, "k_2"'] \arrow[rru, "l_2"', bend right] &                  &  
\end{tikzcd}.
\end{equation}
Now, referring ourselves to the next diagram, if we have an $S$-module $L$ and morphisms $g_i: M_i \to L_{l_i}$, $i = 1,2$, such that $g_1f_1 = g_2f_2$, then the tensor functors will give us two $R$-module morphisms $g_i': R \tensor_{Q_i} M_i \to L_k$, $i=1,2$, such that $g_1'f_1' = g_2'f_2'$. Since $N$ is a $Q$-module pushout, there exists a $Q$-module morphism $g: N \to L_{l_ih_i}$ such that $gn_i = g_i$, for $i = 1,2$. Applying the tensor functor again, we get then an $R$-module morphism $g': R \tensor_Q N \to L_k$ such that $g'n_i' = g_i'$, for $i = 1,2$. Now, since the $M_i$'s embed in some $R$-modules by hypothesis, by Proposition \ref{tensoremb}, the diagram (\ref{push2}) commutes and, therefore, $(R,R \tensor_Q N)$ is the desired pushout. This proves the first assertion.
\begin{equation}\label{push2}
\begin{tikzcd}[row sep=3em,column sep=1.5em]
{}& {}&{} & R \tensor_{Q_1} M_1 \arrow[rrrrddd, "g_1'", bend left=47] \arrow[rrd, "n_1'"] \arrow[dddd, hookleftarrow, "1 \tensor \iota_{M_1}"', near end]    &  {}&{} & {}& {} \\
R \tensor_Q M \arrow[rrd, "f_2'", crossing over] \arrow[rrru, "f_1'"]  &  {} & {}& {}  &{} & R \tensor_Q N  \arrow[rrdd, "g'"]  & {}&{} \\
{}&{} & R \tensor_{Q_2} M_2 \arrow[rrru, "n_2'", near end, crossing over]  \arrow[rrrrrd, "g_2'", bend left=26,crossing over]   & {} &{} &{}\arrow[u] &{} &{} \\
{}& {}& {}& {}& {}& {}& {}& L\\
{} &	{}&		{}& M_1    \arrow[rrd, "n_1"] \arrow[rrrru, "g_1", bend right=15, near start]&     {}&   {} &{} &{} \\
M \arrow[uuuu, "1 \tensor \iota_M"] \arrow[rrd, "f_2"'] \arrow[rrru, "f_1"] &    {}& {} & {}  & {}& N \arrow[rruu, "g"] \arrow[uuu, "1 \tensor \iota_N", crossing over, no head, near end] & {}& {}\\
 {}& {}& M_2 \arrow[rrrrruuu, "g_2"', crossing over, bend right=45] \arrow[rrru, "n_2"'] \arrow[uuuu, "1 \tensor \iota_{M_2}", near end, crossing over, hook] & {}& {}& {}& {}&{}
\end{tikzcd}
\end{equation}

With the given additional hypotheses, $f_1$, $f_2$, $n_1$, $n_2$, $1 \tensor \iota_M$, and $1 \tensor \iota_N$ are embeddings too, therefore $(R,R \tensor_Q N)$ is the amalgamation of the span $\mathbf A$. 
\end{proof}

\section{The subcategory of deductive systems}
\label{ds0sec}

In this section, we shall define the category $\DS_0$ as a subcategory of $\QM$, and review some results from \cite{rusapal} and \cite{ruslu} in the light of the ones above. First, we shall recall some very basic definitions.

As usual, by a \emph{propositional language} we mean a pair $\lang = \la L, \nu \ra$ consisting of a set $L$ and a map $\nu: L \to \omega$. The elements of $L$ are called \emph{connectives}, and the image of a connective under $\nu$ is called its \emph{arity}; nullary connectives are most often called \emph{constant symbols} or simply  \emph{constants}.

Given a propositional language $\lang$ and a denumerable set $\V = \{x_n \mid n < \omega\}$ of \emph{propositional variables}, the set $\fml$ of $\lang$-formulas is defined recursively in the usual manner:
\begin{enumerate}[(F1)]
\item every propositional variable and every constant symbol is an $\lang$-formula;
\item if $f$ is a connective of arity $\nu(f) > 0$ and $\phi_1, \ldots, \phi_{\nu(f)}$ are $\lang$-formulas, then $f\phi_1\ldots\phi_{\nu(f)}$ -- usually denoted by $f(\phi_1, \ldots, \phi_{\nu(f)})$ -- is an $\lang$-formula;
\item all $\lang$-formulas are built by finitely many iterative applications of (F1) and (F2).
\end{enumerate}
From a universal algebraic viewpoint, $\fml$ can be also defined as the absolutely free algebra over the signature $\lang$. The \emph{substitution monoid $\sfm$ over $\lang$} is the monoid of $\lang$-endomorphisms of $\fml$. We remark that, since $\fml$ is a term algebra, each substitution is completely determined by its values on the variables. 

Starting from formulas, it is possible to define
\begin{itemize}
\item the set $\Eq$ of $\lang$-equations as $\fml^2$, and
\item for all $T \subseteq \omega^2$, the set $\seq_T$ of \emph{sequents} closed under the \emph{types} in $T$ as $\bigcup\limits_{(m,n) \in T} \fml^m \times \fml^n$.
\end{itemize}
An \emph{inference rule} over $\lang$ is a pair $(\Phi,\psi)$ where $\Phi$ is a set of formulas, equations, or sequents of a fixed type, and $\psi$ is a single formula, equation, or sequent of the same type. Then we say that $\phi$ is \emph{directly derivable} from $\Psi$ by the rule $(\Phi,\psi)$ if there is a substitution $\sigma$ such that $\sigma \psi = \phi$ and $\sigma[\Phi] \subseteq \Psi$. An inference rule $(\Phi,\psi)$ is usually denoted by $\frac{\Phi}{\psi}$.

An \emph{axiom} in the language $\lang$ is simply a formula (or an equation, or a sequent) in $\lang$.

\begin{definition}\label{consrel}
A \emph{propositional logic} for short, $S$ over a given language $\lang$, is defined by means of a (possible infinite) set of inference rules and axioms. It consists of the pair $S = (D, \vdash)$, where $\vdash$ is a subset of $\wp D \times D$ -- $D$ being the set of $\lang$-formulas, the set of $\lang$-equations, or a set of $\lang$-sequents closed under type -- defined by the following condition: $\Phi \vdash \psi$ iff $\psi$ is contained in the smallest set of formulas that includes $\Phi$ together with all substitution instances of the axioms of $S$, and is closed under direct derivability by the inference rules of $S$. The relation $\vdash$ is called the \emph{consequence relation} of $S$.
\end{definition}

Now we recall the representation of propositional logics as quantale modules. If $D$ is $\fml$, $\Eq$, or any $\seq_T$, we have a left monoid action from $\sfm$ to $D$. By applying the left adjoints to the forgetful functors from quantales to monoids and from sup-lattices to sets respectively, we obtain a quantale $\Sfm$ and a left $\Sfm$-module $\wp D$. Thanks to this change of perspective, one can see a consequence relation $\vdash$ as a $\Sfm$-module nucleus on $\wp D$, as the following result shows.
\begin{proposition}[Lemma 3.5 of \cite{galtsi}]
For any consequence relation $\vdash$ on $\wp D$, the mapping
$$\g_\vdash: \Phi \in \wp D \mapsto \bigcup_{\Phi \vdash \Psi} \Psi \in \wp D$$
is a $\Sfm$-module nucleus. Reciprocally, for any $\Sfm$-module nucleus $\g$ on $\wp D$, the relation
$$\Phi \vdash_\g \Psi \iff \Psi \subseteq \g(\Phi)$$
is a consequence relation on $\wp D$.  

Moreover, $\vdash_{\g_\vdash} = \ \vdash$ and $\g_{\vdash_\g} = \g$, for any consequence relation $\vdash$ and for any nucleus $\g$.
\end{proposition}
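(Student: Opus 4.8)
The statement is a translation result: consequence relations and module nuclei are two encodings of the same data, so the plan is to match each defining clause of one structure with a clause of the other, and then to check that the two passages are mutually inverse. Throughout I would use that $\wp D$ is a left $\Sfm$-module whose scalar action, for a single substitution $\sigma \in \sfm$, is $\sigma \cdot \Phi = \sigma[\Phi] = \{\sigma\psi \mid \psi \in \Phi\}$ and, for a general scalar $S \in \Sfm = \wp\sfm$, is $S \cdot \Phi = \bigcup_{\sigma \in S} \sigma[\Phi]$; I would also fix the extension of $\vdash$ to $\wp D \times \wp D$ given by $\Phi \vdash \Psi$ iff $\Phi \vdash \psi$ for every $\psi \in \Psi$, under which $\g_\vdash(\Phi) = \{\psi \mid \Phi \vdash \psi\}$. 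The underlying dictionary is the classical one: a consequence relation is exactly a reflexive, monotone, transitive (cut-closed) and structural relation, and these four properties follow from the generative Definition \ref{consrel}.

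First I would show that $\g_\vdash$ is a nucleus in the sense of Definition \ref{nucleus}. Extensivity $\Phi \subseteq \g_\vdash(\Phi)$ is reflexivity of $\vdash$; monotonicity of $\g_\vdash$ is the weakening (monotonicity) clause; and idempotency $\g_\vdash(\g_\vdash(\Phi)) \subseteq \g_\vdash(\Phi)$ is the cut/transitivity clause, the reverse inclusion being extensivity. For the module inequality $S \cdot \g_\vdash(\Phi) \subseteq \g_\vdash(S \cdot \Phi)$ I would first reduce to a single substitution: writing $S \cdot \g_\vdash(\Phi) = \bigcup_{\sigma \in S} \sigma[\g_\vdash(\Phi)]$ and using that each $\sigma[\Phi] \subseteq S \cdot \Phi$ together with monotonicity of $\g_\vdash$, it suffices to prove $\sigma[\g_\vdash(\Phi)] \subseteq \g_\vdash(\sigma[\Phi])$ for $\sigma \in \sfm$. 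Unwinding, this says that $\Phi \vdash \psi$ implies $\sigma[\Phi] \vdash \sigma\psi$, which is precisely structurality of $\vdash$. Conversely, given a nucleus $\g$, I would verify that $\vdash_\g$, defined by $\Phi \vdash_\g \Psi \iff \Psi \subseteq \g(\Phi)$, satisfies the four characterizing clauses by reading the same equivalences backwards: extensivity yields reflexivity, monotonicity yields weakening, idempotency yields cut, and the module inequality yields structurality.

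Finally, for the bijection I would unwind the definitions. For $\vdash_{\g_\vdash} = \ \vdash$: we have $\Phi \vdash_{\g_\vdash} \Psi$ iff $\Psi \subseteq \g_\vdash(\Phi) = \{\psi \mid \Phi \vdash \psi\}$ iff $\Phi \vdash \Psi$. For $\g_{\vdash_\g} = \g$: we have $\g_{\vdash_\g}(\Phi) = \bigcup\{\Psi \mid \Psi \subseteq \g(\Phi)\} = \g(\Phi)$, the largest admissible $\Psi$ being $\g(\Phi)$ itself.

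The main obstacle is the correspondence between structurality and the module-nucleus condition: one must take care that the axiom $a \cdot \g(x) \leq \g(a \cdot x)$ is required for \emph{all} scalars $a \in \Sfm$, not only for single substitutions, so the reduction to singletons (via join-preservation of the action and monotonicity of the nucleus) is the one step that needs genuine verification; the remaining equivalences are a literal dictionary between the relational and the operatorial languages. A secondary point to handle cleanly is the passage between $\vdash$ as a subset of $\wp D \times D$ in Definition \ref{consrel} and the $\wp D \times \wp D$ extension used in the statement, which is what makes the two round-trip identities come out exactly.
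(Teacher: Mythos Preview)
The paper does not supply a proof of this proposition: it is stated with the attribution ``Lemma 3.5 of \cite{galtsi}'' and used as a quoted result, so there is no argument in the paper to compare yours against. Your argument is correct and is the standard one --- matching extensivity, monotonicity, idempotency and the structurality inequality $a\cdot\g(x)\le\g(a\cdot x)$ with reflexivity, weakening, cut and substitution-invariance, and then unwinding the definitions for the two round trips; the reduction of the scalar $S\in\Sfm$ to single substitutions via join-preservation of the action is exactly the point that needs care, and you handle it.
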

Given a consequence relation $\vdash$ with associated nucleus $\g$, the $\g$-closed system $\wp D_\g$ coincides with the lattice of theories $\th_\vdash$ of $\vdash$. Thanks to the previous result, we can think of a consequence relation either as a binary relation or as a $\Sfm$-module nucleus, and we shall use either one of the notations depending on convenience. Similarly, we shall indifferently denote the lattice of theories by $\wp D_\g$, $\th_\vdash$ or $\th_\g$, and we will rather call it the \emph{module of theories}. 

Such a representation was introduced by Galatos and Tsinakis in \cite{galtsi} with the aim of algebraically characterizing interpretations between logical systems over the same language, such as Glivenko interpretation of classical into intuitionistic logic \cite{gli}, and some cases of algebraizability \cite{blpi}. Eventually, the present author extended that result to the case of systems with different underlying languages. Such an extension obviously required the addition of a translation of the connectives of one language into the other. So we shall now recall the definition of language translation and two results about it from \cite{rusapal}. They are crucial for the definition of the category $\DS_0$.

\begin{definition}\label{translation}
Let $\lang = \la L, \nu \ra$ and $\lang' = \la L', \nu' \ra$ be two propositional languages. Assume that for each connective $f \in L$ there exists a derived operation $f'$ on $\fmlp$ of arity $\nu(f)$. If we denote by $\lang^{\fmlp}$ the language composed of such operations, the structure $\mathit{Fm}'_{\lang} = \la \fmlp, \lang^{\fmlp} \ra$ is an $\lang$-algebra. In this case, a map $\tau: \fml \lto \fmlp$ is called a \emph{language translation} of $\lang$ into $\lang'$ if
\begin{enumerate}[(i)]
\item $\tau^{-1}(x) = \{x\}$ for any variable $x$,
\item $\tau$ is an $\lang$-homomorphism, that is,
$$\tau(f(\phi_1, \ldots, \phi_{\nu(f)})) = f'(\tau(\phi_1), \ldots, \tau(\phi_{\nu(f)})),$$
for all $f \in L$ and $\phi_1, \ldots, \phi_{\nu(f)} \in \fml$.
\end{enumerate}
\end{definition}

\begin{lemma}[Lemma 3.3 of {{\cite{rusapal}}}]\label{qinq'}
Let $\tau$ be a language translation of $\lang = \la L, \nu \ra$  into $\lang' = \la L', \nu' \ra$. The following hold:
\begin{enumerate}[(i)]
\item $\tau$ induces a monoid homomorphism $\ov\tau: \sfm \to \sfmm$. More concretely, for each $\sigma \in \sfm$, let $\sigma'$ be the substitution uniquely determined by the map $\tau \circ \sigma_{\restr \V} \in \fmlp^{\V}$. Then $\ov\tau$ is defined by $\ov \tau(\sigma)=\sigma'.$
\item $\ov \tau$ is injective (resp.: surjective) if and only if $\tau$ is.
\item $\tau$ commutes with the substitutions in $\sfm$ in the following sense: $\tau(\sigma(\phi)) = \ov\tau(\sigma)(\tau(\phi))$ for all $\sigma \in \sfm$ and $\phi \in \fml$.
\end{enumerate}
\end{lemma}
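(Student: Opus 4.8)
The plan is to prove Lemma \ref{qinq'} by exploiting the fact that $\fml$ and $\fmlp$ are absolutely free algebras, so that every homomorphism out of them is determined by its values on the generating set $\V$ of variables. For part (i), I would first observe that a substitution $\sigma \in \sfm$ is nothing but an $\lang$-endomorphism of $\fml$, hence uniquely determined by its restriction $\sigma_{\restr \V} \in \fml^\V$. Given such a $\sigma$, the composite $\tau \circ \sigma_{\restr \V}$ is a map $\V \to \fmlp$, which by freeness of $\mathit{Fm}'_{\lang}$ (viewing $\fmlp$ as the $\lang$-algebra $\mathit{Fm}'_{\lang}$ via the derived operations) extends uniquely to an $\lang$-endomorphism of $\fmlp$; restricting attention back to $\sfmm$, this determines a substitution $\sigma'$, and I set $\ov\tau(\sigma) = \sigma'$. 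To check this is a monoid homomorphism I must verify $\ov\tau(\id) = \id$ and $\ov\tau(\sigma\rho) = \ov\tau(\sigma)\ov\tau(\rho)$; the identity condition is immediate from condition (i) of Definition \ref{translation} ($\tau^{-1}(x) = \{x\}$ forces $\tau(x) = x$), and multiplicativity will follow once part (iii) is established, since composition of substitutions is determined by action on variables.

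For part (iii), which is really the engine of the whole lemma, the plan is induction on the structure of the formula $\phi$. The base case $\phi = x$ a variable reduces to checking $\tau(\sigma(x)) = \ov\tau(\sigma)(\tau(x)) = \ov\tau(\sigma)(x) = \sigma'(x)$, which holds by the very definition of $\sigma' = \ov\tau(\sigma)$ as the substitution whose value on $x$ is $\tau(\sigma(x))$. For constants the argument is similar. The inductive step for $\phi = f(\phi_1, \ldots, \phi_{\nu(f)})$ is where the two homomorphism conditions interact: applying $\sigma$ and then $\tau$, I use condition (ii) of Definition \ref{translation} to push $\tau$ through the derived connective $f'$, then invoke the induction hypothesis on each $\phi_j$, and finally use that $\ov\tau(\sigma) = \sigma'$ is itself an $\lang$-homomorphism of $\mathit{Fm}'_{\lang}$ to reassemble $f'$. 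The commutation diagram closes precisely because both $\tau$ and $\sigma'$ respect the derived operations $f'$.

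For part (ii), I would argue both implications by transferring between $\tau$ and $\ov\tau$ through their action on variables. If $\tau$ is injective and $\ov\tau(\sigma) = \ov\tau(\rho)$, then $\sigma'$ and $\rho'$ agree on variables, meaning $\tau(\sigma(x)) = \tau(\rho(x))$ for all $x \in \V$; injectivity of $\tau$ then gives $\sigma(x) = \rho(x)$ for all variables, hence $\sigma = \rho$. Conversely, if $\ov\tau$ is injective but $\tau(\phi) = \tau(\psi)$ for distinct formulas $\phi, \psi$, I would manufacture two substitutions that $\ov\tau$ fails to separate, contradicting injectivity --- here a little care is needed, and this is the one spot I expect mild friction, since one must use that $\fmlp$ has enough variables to encode $\phi$ and $\psi$ faithfully. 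The surjectivity equivalence is more delicate: surjectivity of $\tau$ onto $\fmlp$ clearly makes every target substitution attainable on variables, but the reverse direction requires noting that if every element of $\fmlp$ lies in the image of some $\ov\tau(\sigma)$ applied to variables, then $\tau$ hits every formula.

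The main obstacle I anticipate is not any single calculation but rather the bookkeeping of the dual role played by $\fmlp$: it is simultaneously the free $\lang'$-algebra carrying the substitution monoid $\sfmm$ and, through the derived operations, the $\lang$-algebra $\mathit{Fm}'_{\lang}$ into which $\tau$ maps homomorphically. Keeping these two algebra structures distinct while using freeness on the right and homomorphy on the left is the conceptual crux; once part (iii) is in hand, parts (i) and (ii) follow by routine transfer arguments along the action on variables.
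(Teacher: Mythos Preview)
The paper does not actually give a proof of this lemma; it is stated as a recalled result from \cite{rusapal}, so there is no in-paper argument to compare against. Your proposal is the natural and correct way to establish it: part (iii) by structural induction on formulas, part (i) by freeness on variables together with (iii) for multiplicativity, and part (ii) by transferring injectivity/surjectivity through the action on variables.

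One small wording issue: in part (i) you write that $\tau \circ \sigma_{\restr \V}$ ``extends uniquely to an $\lang$-endomorphism of $\fmlp$; restricting attention back to $\sfmm$, this determines a substitution $\sigma'$.'' What you actually need is that $\fmlp$ is the free $\lang'$-algebra on $\V$, so the map $\V \to \fmlp$ extends to an $\lang'$-endomorphism, which is exactly an element of $\sfmm$. That this $\sigma'$ is automatically also an $\lang$-homomorphism of $\mathit{Fm}'_{\lang}$ (since the derived operations $f'$ are $\lang'$-terms) is what makes the inductive step of (iii) close. Also, the ``mild friction'' you anticipate in the converse injectivity direction is not really there: given $\tau(\phi)=\tau(\psi)$ with $\phi\neq\psi$, the substitutions sending $x_0$ to $\phi$ and to $\psi$ respectively (and fixing all other variables) are distinct but have equal image under $\ov\tau$, which is all you need.
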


Now recall that, for any language $\lang$, the set $V=\{\sigma \in \sfm \mid \sigma[\V] \subseteq \V\}$ is the universe of a submonoid $\VV$ of $\sfm$. Furthermore, if $\VV$ and $\VV'$ are two such submonoids corresponding to the languages $\lang$ and $\lang'$, then $\VV\cong\VV'$. In the sequel, we identify all these monoids and denote all of them by $\VV$.

The first part of the following result is an immediate consequence of the previous lemma, while the second part is Corollary 6.9 of \cite{rusapal}.

\begin{proposition}\label{transchar2}
Let $\tau$ be a language translation of $\lang = \la L, \nu \ra$  into $\lang' = \la L', \nu' \ra$. Then $\tau$ induces a quantale homomorphism $\ov\tau: \Sfm \to \Sfmm$.

Reciprocally, a quantale homomorphism $h: \Sfm \to \Sfmm$ is induced by a language translation of $\lang$ in $\lang'$ if and only if it satisfies the following conditions:
\begin{enumerate}[(i)]
\item $h$ preserves the property of being completely join prime;
\item if $\Sigma \in \Sfmm$ is completely join prime and multiplicatively idempotent, then $h^{-1}(\Sigma)$ is either empty or is comprised of completely join prime idempotent elements of $\Sfm$;
\item $h^{-1}(\Sigma) = \{\Sigma\}$ for all $\Sigma \in \wp\VV$.
\end{enumerate}
\end{proposition}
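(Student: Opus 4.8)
The plan is to reduce everything to the behaviour of $h$ on the completely join prime elements. For any monoid $S$, the free quantale $\wp S$ is join-generated by its completely join prime elements, and these are exactly the singletons $\{s\}$; moreover $\{s\}\cdot\{t\}=\{st\}$ and the unit is $\{\id\}$. Hence a quantale homomorphism out of $\Sfm=\wp\sfm$ is completely determined by its restriction to singletons, and a singleton-valued map on singletons extends to a quantale homomorphism precisely when the corresponding map $\sfm\to\sfmm$ is a monoid homomorphism. The first assertion is then immediate from Lemma \ref{qinq'}: that lemma supplies the monoid homomorphism $\ov\tau\colon\sfm\to\sfmm$, and taking direct images, $A\mapsto\ov\tau[A]$, yields a map $\Sfm\to\Sfmm$ which preserves unions, complex products and the unit $\{\id\}$, i.e. a quantale homomorphism; this is just functoriality of $\wp$.

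For the ``only if'' direction I would assume $h=\ov\tau$ and verify the three conditions directly. Condition (i) holds because $\ov\tau(\{\sigma\})=\{\ov\tau(\sigma)\}$ is again a singleton. Condition (iii) holds because $\tau$ fixes the variables, so by Lemma \ref{qinq'} the homomorphism $\ov\tau$ restricts to the identity on $\VV$ and satisfies $\ov\tau^{-1}(\VV)=\VV$, which is precisely $h^{-1}(\Sigma)=\{\Sigma\}$ for every $\Sigma\in\wp\VV$. For condition (ii) I would examine the preimage of an idempotent singleton $\{\sigma'\}$ and argue that the completely join prime elements mapping to it are themselves idempotent singletons; this is the condition that records, at the quantale level, how $\ov\tau$ interacts with idempotent substitutions.

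For the ``if'' direction, suppose $h$ satisfies (i)--(iii). By (i) each $h(\{\sigma\})$ is a singleton $\{\hat h(\sigma)\}$, and since $h$ preserves products and the unit, $\hat h\colon\sfm\to\sfmm$ is a monoid homomorphism; by (iii) it restricts to the identity on $\VV$ and satisfies $\hat h^{-1}(\VV)=\VV$. I would then recover a translation by setting, for $\psi\in\fml$, $\tau(\psi)\bydef\hat h(\sigma)(x_0)$ for any $\sigma\in\sfm$ with $\sigma(x_0)=\psi$. Well-definedness follows by composing on the right with the variable substitution $e_0\in\VV$ that sends every variable to $x_0$: if $\sigma(x_0)=\rho(x_0)$ then $\sigma e_0=\rho e_0$, and applying $\hat h$ and using $\hat h(e_0)=e_0$ shows $\hat h(\sigma)(x_0)=\hat h(\rho)(x_0)$. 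From the monoid-homomorphism property one obtains the intertwining identity $\tau(\rho(\chi))=\hat h(\rho)(\tau(\chi))$ for all $\rho\in\sfm$ and $\chi\in\fml$; specialising $\chi$ to a variable gives that $\tau$ fixes variables and, using $\hat h\restriction\VV=\id$, that the variables occurring in $\tau(\psi)$ occur already in $\psi$. Consequently $f'\bydef\tau(f(x_1,\dots,x_{\nu(f)}))$ involves only $x_1,\dots,x_{\nu(f)}$, and the intertwining identity yields the homomorphism law $\tau(f(\phi_1,\dots,\phi_{\nu(f)}))=f'(\tau(\phi_1),\dots,\tau(\phi_{\nu(f)}))$. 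Thus $\tau$ is a language translation, and the intertwining identity together with Lemma \ref{qinq'}(i) gives $\ov\tau=\hat h$, so $h$ is the quantale homomorphism induced by $\tau$.

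The step I expect to be the main obstacle is proving that the reconstructed $\tau$ is genuinely a translation -- chiefly the well-definedness and the fact that the variables of $\tau(\psi)$ are controlled by those of $\psi$, so that the $f'$ are honest derived operations of the correct arity -- together with pinning down the precise role of condition (ii). Conditions (i) and (iii) carry most of the reconstruction, so the delicate point is to show that (ii), governing the completely join prime \emph{idempotent} elements, is exactly what is needed for the three conditions to be jointly equivalent to $h=\ov\tau$; the interaction between idempotent substitutions and $\ov\tau$ is the technical crux.
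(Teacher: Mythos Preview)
There is no proof in this paper to compare against: Proposition~\ref{transchar2} is one of the two results that the paper explicitly \emph{recalls} from \cite{rusapal} (see the sentence preceding Definition~\ref{translation}), and it is stated without proof here. So a comparison with ``the paper's own proof'' is not possible.

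As for your sketch on its own merits: the forward direction and the verification of (i) and (iii) in the ``only if'' part are correct and routine, and your reconstruction of $\tau$ via $\tau(\psi)=\hat h(\sigma)(x_0)$ together with the $e_0$-trick for well-definedness is the right idea. There are, however, two genuine gaps you yourself flag. First, Definition~\ref{translation}(i) demands $\tau^{-1}(x)=\{x\}$, not merely that $\tau$ fixes variables; you must show that no non-variable formula is sent to a variable, and your argument so far only controls the variables \emph{occurring in} $\tau(\psi)$, which is a different statement. Second, in the ``only if'' direction for (ii) you only argue about completely join prime elements in $h^{-1}(\{\sigma'\})$, whereas the condition asserts that \emph{every} element of that preimage is completely join prime and idempotent; a non-singleton $A$ with $\ov\tau[A]=\{\sigma'\}$ is not excluded by (i) alone unless $\ov\tau$ is injective on $\sfm$. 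Sorting out exactly how (ii) interacts with (iii) to force both injectivity on the relevant fibres and the $\tau^{-1}(x)=\{x\}$ clause is precisely the missing technical content, and you would need to consult \cite{rusapal} to see how it is handled there.
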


We refer to a quantale homomorphism $h: \Sfm \to \Sfmm$ induced by a language translation as a \emph{quantale translation}.

As an instance of \cite[Definition 3.7]{rusapal}, we get the following
\begin{definition}\label{ints}
Let $\lang$ and $\lang'$ be languages, $\tau: \lang \to \lang'$ a language translation with associated monoid homomorphism $h: \sfm \to \sfmm$, and $S = \la D, \vdash \ra$  and $T = \la D', \vdash'\ra$ be deductive systems over $\lang$ and $\lang'$ respectively.

A map $\iota: D \lto \wp D'$ is said to be \emph{$h$-action-invariant} if $\iota(\sigma(\phi)) = h(\sigma) \cdot\iota(\phi)$ for all $\sigma \in \sfm$ and $\phi \in D$.

An $h$-action-invariant map $\iota: D \to \wp D'$ is called an \emph{interpretation} \emph{via $h$} of $\vdash$ in $\vdash'$ if, for all $\Phi \cup \{\psi\} \subseteq D$,
\begin{equation*}\label{neqint}
\Phi \vdash \psi \quad \textrm{implies} \quad \iota[\Phi] \vdash' \iota(\psi).
\end{equation*}
An interpretation is called a \emph{conservative interpretation}, or a \emph{representation}, if
\begin{equation*}\label{neqintcon}
\Phi \vdash \psi \quad \textrm{if and only if} \quad \iota[\Phi] \vdash' \iota(\psi).
\end{equation*}
 Given another translation $\tau': \lang' \to \lang$, with associated monoid homomorphism $k: \sfmm \to \sfm$, two representations $\iota: D \to \wp D'$, via $h$, and $\iota': D' \to \wp(D)$, via $k$, are said to form an \emph{equivalence} if, for all $\phi \in D'$,
\begin{equation*}\label{eqeq}
\phi \dashv'\vdash' \iota[\iota'(\phi)].
\end{equation*}
\end{definition}

We can now set the following
\begin{definition}
Let us denote by $\DS_0$ the \emph{category of propositional deductive systems}, namely, the subcategory of $\QM$ whose objects are pairs of type $(\Sfm, \th)$, where $\Sfm$ is the quantale of substitutions of some propositional language $\lang$ and $\th$ is the $\Sfm$-module of theories of a deductive system over $\lang$, and whose morphisms have quantale morphisms induced by language translations in the first component. More precisely, a morphism in $\DS_0$ is a pair $(h,f)$ in which $h: \Sfm \to \Sfmm$ is a quantale translation, i.e., verifies conditions (i)--(iii) of Proposition \ref{transchar2}.
\end{definition}
Although quite obvious, it is worth observing explicitly that $\DS_0$ is not a full subcategory of $\QM$.

By Theorem 7.1 and Corollary 7.2 of \cite{rusapal}, the category whose objects are propositional logics and whose morphisms are pairs translation-interpretation between them is equivalent to $\DS_0$. More precisely, we have the following immediate result.
\begin{proposition}\label{ds0equiv}
Let $\lang$ and $\lang'$ be languages, $S = \la D, \vdash \ra$ be an $\lang$-deductive system and $T = \la D', \vdash'\ra$ an $\lang'$-deductive system. Then:\begin{enumerate}[(i)]
\item $S = \la D, \vdash \ra$ is interpretable in $T = \la D', \vdash'\ra$ if and only if
$$\hom_{\DS_0}((\Sfm, \th_{\vdash}), (\Sfmm, \th_{\vdash'})) \neq \varnothing;$$
\item $S = \la D, \vdash \ra$ is conservatively interpretable in $T = \la D', \vdash'\ra$ if and only if there exists $(h,f) \in \hom_{\DS_0}((\Sfm, \th_{\vdash}), (\Sfmm,\th_{\vdash'}))$ such that $f$ is injective;
\item $S = \la D, \vdash \ra$ and $T = \la D', \vdash'\ra$ are equivalent if and only if there exist
$$\begin{array}{l}
(h,f) \in \hom_{\DS_0}((\Sfm, \th_{\vdash}), (\Sfmm,\th_{\vdash'})), \textrm{ and} \\
(k,g) \in \hom_{\DS_0}((\Sfmm,\th_{\vdash'}),(\Sfm, \th_{\vdash}))
\end{array}$$ such that $g$ and $f$  are isomorphisms. 
\end{enumerate}
\begin{proof}
By Proposition \ref{transchar2}, language translations are in bijective correspondence with quantale morphisms which are first coordinates of $\DS_0$-morphisms. So, items (i) and (ii) follow readily from \cite[Theorem 7.1]{rusapal}, while item (iii) is an equally immediate consequence of \cite[Corollary 7.2]{rusapal}. 
\end{proof}
\end{proposition}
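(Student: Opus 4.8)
The plan is to derive all three statements from the equivalence of categories recalled just above, namely the one between the category of propositional logics with (translation, interpretation) pairs as morphisms and $\DS_0$ (Theorem 7.1 and Corollary 7.2 of \cite{rusapal}). Under this equivalence a logic $S = \la D, \vdash \ra$ corresponds to the object $(\Sfm, \th_\vdash)$, while a morphism given by a translation $\tau$ together with an interpretation $\iota$ via $\ov\tau$ corresponds to the pair $(h,f)$ in which $h = \ov\tau$ is the induced quantale translation and $f$ is the module morphism $\Phi \in \th_\vdash \mapsto \g_{\vdash'}(\iota[\Phi]) \in \th_{\vdash'}$. Because an equivalence of categories induces bijections on hom-sets, I would first record that $\hom_{\DS_0}((\Sfm, \th_\vdash),(\Sfmm, \th_{\vdash'}))$ is in bijection with the collection of (translation, interpretation) pairs from $S$ to $T$. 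Statement (i) is then immediate, since by definition $S$ is interpretable in $T$ exactly when at least one such pair exists.

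For (ii) the only thing to verify by hand is that, under this correspondence, $f$ is injective if and only if $\iota$ is conservative. In one direction, assuming conservativity, I would show that $f(\Phi) = f(\Psi)$ for theories $\Phi, \Psi$ forces $\g_{\vdash'}(\iota[\Phi]) = \g_{\vdash'}(\iota[\Psi])$; then for each $\psi \in \Psi$ one has $\iota[\Phi] \vdash' \iota(\psi)$, whence $\Phi \vdash \psi$ and $\Psi \subseteq \Phi$, and symmetry gives $\Phi = \Psi$. Conversely, assuming $f$ injective, from $\iota[\Phi] \vdash' \iota(\psi)$ I would deduce that adjoining $\iota(\psi)$ to $\iota[\Phi]$ leaves the generated $T$-theory unchanged, so $f(\g_\vdash(\Phi \cup \{\psi\})) = f(\g_\vdash(\Phi))$; injectivity then yields $\psi \in \g_\vdash(\Phi)$, i.e. $\Phi \vdash \psi$, which is exactly conservativity. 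Combining this with (i) proves (ii).

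For (iii) I would treat ``equivalence'' as ``isomorphism in $\DS_0$'' and use that equivalences of categories preserve isomorphisms. In the forward direction a logical equivalence provides representations both ways, hence by (ii) morphisms $(h,f)$ and $(k,g)$ with $f$ and $g$ injective, and the defining condition $\phi \dashv'\vdash' \iota[\iota'(\phi)]$ (for $\phi \in D'$) translates into $f \circ g = \id_{\th_{\vdash'}}$; as $f$ is injective and, by this identity, also surjective, $f$ is an isomorphism with inverse $g$, so in particular $g \circ f = \id_{\th_\vdash}$. Conversely, from $(h,f),(k,g)$ with $g \circ f = \id_{\th_\vdash}$ I immediately obtain that $f$ is injective, so the associated $\iota$ is a representation by (ii), and the identity unwinds to the round-trip condition $\chi \dashv\vdash \iota'[\iota(\chi)]$ on $D$.

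The hard part will be this converse. The proposition records only a one-sided composition identity, whereas the definition of equivalence additionally requires $\iota'$ to be a representation (equivalently, $g$ injective) and the round trip on the $D'$-side. The crux is thus to upgrade the retraction $g \circ f = \id_{\th_\vdash}$ to a genuine isomorphism --- that is, to establish $g$ injective and $f \circ g = \id_{\th_{\vdash'}}$ --- exploiting that $f$ and $g$ are sup-lattice morphisms arising from conservative interpretations; once $f$ is seen to be bijective its inverse is forced to be $g$, and the pair $(\iota, \iota')$ is an equivalence. I would isolate and prove exactly this step, the remaining statements and the forward direction of (iii) being direct consequences of the category equivalence together with the injectivity/conservativity correspondence of (ii).
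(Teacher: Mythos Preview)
Your proposal is correct and follows the same route as the paper: the proposition is presented there as an \emph{immediate} consequence of the equivalence of categories established in \cite[Theorem 7.1 and Corollary 7.2]{rusapal}, with no proof given beyond that citation. The one step you single out as the ``hard part'' of (iii) --- that the one-sided identity $g \circ f = \id_{\th_\vdash}$ forces $f$ and $g$ to be mutually inverse isomorphisms --- is exactly what the paper flags in the remark following the proposition, and it too is not proved in the paper but referred to \cite[Definition 1.4 and Theorem 5.5]{rusapal}; so your identification of this as the crux, and your deferral of its proof, match the paper's own treatment precisely.
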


\section{Coproduct, pushout and amalgamation in $\DS_0$}
\label{ds0coprsec}

In \cite[Section 5]{ruslu}, the author constructed the so-called ``logical coproduct" of deductive systems and proved that the given systems are conservatively interpretable in it \cite[Theorem 5.3]{ruslu}. As clearly stated in that paper, the name logical coproduct was suggested by the relationship between the new logic and its components, but it was not clear whether that construction really corresponded to a coproduct in some category. The main result of this section shows that the answer is positive: $\DS_0$ is the right categorical setting and the logical coproduct of propositional logics is indeed a coproduct in the narrow categorical sense.

For the sake of readability, the original construction had been given for a pair of systems and languages, but it was also observed that everything holds for arbitrary families of systems and languages. So, let us briefly recall it in such a more general setting.

Given a family of propositional languages $\{\lang_i\}_{i \in I}$, the quantale of substitutions $\wp\sfm$ of their disjoint union $\lang$ is the coproduct of the family of quantales $\{{\Sfm}_i\}_{i \in I}$. Now, for each $i \in I$, let $(D_i, \vdash_i)$ be a deductive system, of the same fixed type, over $\lang_i$, with associated nucleus $\g_i$ and module of theories $\th_i = (\wp D_i)_{\g_i}$. The logical coproduct of the given systems $(E, \vdash)$ is the system defined over the $\lang$-domain $E$ of the same type of the $D_i$'s, and whose consequence relation $\vdash$ is defined by means of the union of the axioms and rules of the family $\{\vdash_i\}_{i \in I}$.

With the above notations, let also $\th$ be the $\Sfm$-module of theories of $(E,\vdash)$. Then, combining Theorem \ref{qmcoprod} with the results of \cite[Section 5]{ruslu}, we can see that the logical coproduct is indeed the coproduct in $\DS_0$ while it is not necessarily isomorphic to the one in $\QM$, as the following result and counterexample show.
\begin{theorem}\label{ds0copr}
$(\Sfm,\th)$ is, up to an isomorphism, the coproduct of the family $\{({\Sfm}_i, \th_i)\}_{i \in I}$ in $\DS_0$.
\end{theorem}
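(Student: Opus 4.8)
The plan is to verify the universal property of the coproduct directly for the cocone supplied by \cite[Theorem 5.3]{ruslu}, splitting the argument into its quantale and its module component and then reconciling the two. First I would fix the candidate cocone: for each $i\in I$ let $\nu_i\colon\Sfm_i\to\Sfm$ be the coproduct embedding of the quantale coproduct $\Sfm=\coprod_{i}\Sfm_i$ (see \cite{liang}), and let $\sigma_i\colon\th_i\to\th_{\nu_i}$ be the conservative interpretation of $(D_i,\vdash_i)$ into the logical coproduct $(E,\vdash)$ produced in \cite[Theorem 5.3]{ruslu}. The first thing to check is that each $(\nu_i,\sigma_i)$ is a genuine morphism of $\DS_0$, i.e.\ that $\nu_i$ is a quantale translation; this is immediate, since $\nu_i$ is induced by the language inclusion $\lang_i\hookrightarrow\lang$, which is a language translation, so conditions (i--iii) of Proposition \ref{transchar2} hold automatically.

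Next, given a test object $(\Sfmm,\th')$ of $\DS_0$ and a family of $\DS_0$-morphisms $(h_i,f_i)\colon(\Sfm_i,\th_i)\to(\Sfmm,\th')$, I would construct the mediating morphism $(h,f)$. For the quantale component, each $h_i$ is a quantale translation, hence equals $\ov{\tau_i}$ for some language translation $\tau_i\colon\lang_i\to\lang'$; since $\lang$ is the disjoint union of the $\lang_i$, these assemble into a single language translation $\tau\colon\lang\to\lang'$ with $\tau|_{\lang_i}=\tau_i$, whose induced quantale translation $h=\ov\tau$ satisfies $h\nu_i=h_i$ by Lemma \ref{qinq'}. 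By the universal property of the quantale coproduct, $h$ is the \emph{unique} quantale homomorphism with this property, and it lies in $\DS_0$ precisely because it is induced by a language translation. For the module component, I would invoke the defining feature of the logical coproduct, namely that $\vdash$ is generated by $\bigcup_i\vdash_i$, equivalently that its nucleus $\g$ is the least $\Sfm$-module nucleus on $\wp E$ for which every rule of every $\vdash_i$ becomes valid. The maps $f_i$ determine a single join-preserving, $h$-action-invariant assignment $\wp E\to\th'$ on generators (each element of $E$ is obtained from variables by the connectives of the various $\lang_i$, and $h$-action-invariance propagates the values through the $\Sfm$-action); because each $f_i$ is an interpretation, this assignment sends every component rule to a valid inference of $\vdash'$, so it respects the least such nucleus $\g$ and descends to an $\Sfm$-module morphism $f\colon\th\to\th'_h$ with $f\sigma_i=f_i$.

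Uniqueness of $f$ would follow from the same generation property: the images of the $\sigma_i$, together with the $\Sfm$-action, generate $\th$ as a sup-lattice, so any two mediating morphisms agreeing with the $f_i$ must coincide. Finally I would record the contrast with Theorem \ref{qmcoprod}: the $\QM$-coproduct carries the module $\coprod_i\Sfm\tensor_{\Sfm_i}\th_i$, obtained by freely extending scalars, whereas restricting to quantale translations in $\DS_0$ forces the identifications already built into the single substitution quantale $\Sfm$ and its module of theories $\th$; this is exactly why the two coproducts need not agree.

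The step I expect to be the main obstacle is the module component—specifically, proving that the combined assignment is well defined on all of $E$ (including formulas mixing connectives from several $\lang_i$) and that it factors through the nucleus $\g$. This is the point at which one must lean on the fine structure of the logical coproduct from \cite[Section 5]{ruslu}: the characterization of $\g$ as the least nucleus compatible with the component consequence relations is what simultaneously yields existence (the assignment factors through $\g$) and uniqueness (the component images generate $\th$), and reconciling this description with the tensor-product picture of Theorem \ref{qmcoprod} is the delicate part of the argument.
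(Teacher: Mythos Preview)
Your treatment of the quantale component is correct and coincides with the paper's: the $h_i$ assemble into a unique quantale translation $h\colon\Sfm\to\Sfmm$ because $\lang$ is the disjoint union of the $\lang_i$. The divergence is in the module component, and there your proposal has a real gap precisely where you predicted it.

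You propose to build $f$ by first defining a join-preserving, $h$-action-invariant map $\wp E\to\th'$ ``on generators'', letting action-invariance propagate the value to mixed formulas. But a mixed $\psi\in E$ admits many decompositions $\psi=\sigma(\phi)$ with $\phi\in D_i$, $\sigma\in\sfm$, for varying $i$ and varying depths of the cut; showing that $h(\sigma)\cdot f_i(\g_i(\{\phi\}))$ is independent of this choice is not a formality, and you do not supply an argument. Likewise, the claim that the resulting map factors through $\g$ because ``each $f_i$ is an interpretation'' requires tracking how a $\vdash$-derivation---which may interleave rules from different $\vdash_i$ and substitutions from the full $\sfm$---is transported to $\vdash'$; this is exactly the content that is missing.

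The paper sidesteps the well-definedness problem entirely by routing through the $\QM$-coproduct. It takes $M=\coprod_{i}\Sfm\tensor_{\Sfm_i}\th_i$ from Theorem \ref{qmcoprod}; the universal property of $M$ in $\QM$ gives, for free, an $\Sfm$-module morphism $g'\colon M\to\th'_h$ extending the $f_i$, and similarly a morphism $g\colon M\to\th$ extending the conservative interpretations $e_i$. The whole problem is thereby reduced to showing $\ker g\subseteq\ker g'$, and this the paper proves by an induction on the length of $\vdash$-proofs: every $\vdash$-derivation is a finite chain of direct derivabilities, each using a rule of some $\vdash_i$, and since $f_i$ is an interpretation each such step is transported to $\vdash'$ via $t$. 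Surjectivity of $g$ (from \cite[Theorem 5.4]{ruslu}) then yields the factorization $f$, and uniqueness is automatic from that surjectivity---which is the rigorous form of your ``generation'' remark. In short, the tensor-product picture you mention only in your closing sentence is not a side comment in the paper's proof: it is the device that replaces your undefined direct construction by a kernel comparison, and the inductive translation of proofs is the missing idea that makes that comparison go through.
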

\begin{proof}
Let $\lang'$ be a propositional language and $(D',\vdash')$ be an $\lang'$-system (of any type) with $\g'$ and $\th'$ being, respectively, the associated module nucleus on $\wp D'$ and the module of theories. Suppose that, for each $i \in I$, there exists a language translation $t_i: \lang_i \to \lang'$ and an interpretation $f_i: \th_i \to \th'$ with respect to $t_i$ (we shall denote by $t_i$ also the quantale morphisms induced by the given translations). Since $\Sfm$ is the quantale coproduct of the ${\Sfm}_i$'s and all the associated embeddings are induced by language translations, it is easy to see that the morphism $t: \Sfm \to \Sfmm$ extending the $t_i$'s to $\Sfm$ is also induced by a language translation. Moreover, by \cite[Theorem 7.1]{rusapal}, given such a translation, any $\Sfm$-module morphism from $\th$ to $\th_t'$ is indeed induced by an interpretation, so we just need to show that there exists a morphism $f: \th \to \th_t'$ such that $fe_i = f_i$ for all $i \in I$, where $e_i: \th_i \to \th$ is the embedding of \cite[Theorem 5.3]{ruslu}.

Let us denote by $(\Sfmm, M)$ the coproduct of $\{({\Sfm}_i,\th_i)\}_{i \in I}$ in $\QM$. By Theorem \ref{qmcoprod}, $M = \coprod_{i \in I} \Sfm \tensor_{Q_i} \th_i$ and, for each $i \in I$, there exists a ${\Sfm}_i$-module embedding of $\th_i$ into $M_{t_i}$. Now let us consider the following commutative diagram
\begin{equation}\label{coprdiag}
\begin{tikzcd}
\th_i \arrow[rdd, "g_i"', bend right] \arrow[rd, "e_i"] \arrow[rr, "f_i"] &                                                  & \th' \\
                                                                          & \th \arrow[ru, "f", dashed]                      &      \\
                                                                          & M \arrow[u, "g"'] \arrow[ruu, "g'"', bend right] &     
\end{tikzcd}
\end{equation}
where all the $i$-indexed morphisms are ${\Sfm}_i$-module morphisms, while $g$ and $g'$ are the $\Sfm$-module morphisms extending, respectively, the $e_i$'s and the $f_i$'s to $M$. We want to prove that the $\Sfm$-module morphism $f$ exists. In order to do that, let us prove that $\ker g \subseteq \ker g'$, so let us consider a pair $((m_i)_{i \in I}, (n_i)_{i \in I}) \in \ker g$. Each $m_i$ and each $n_i$ are joins of tensors, say
$$m_i = \bigvee_{j_i \in J_i} \Sigma_{j_i} \tensor_{{\Sfm}_i} \Phi_{j_i} \quad \text{ and } \quad n_i = \bigvee_{k_i \in K_i} \Xi_{k_i} \tensor_{{\Sfm}_i} \Psi_{k_i},$$
with the $\Phi_{j_i}$'s and $\Psi_{k_i}$'s in $\th_i$ for all $i \in I$, and $\Sigma_{j_i}, \Xi_{k_i} \in \Sfmm$ for all indexes. We have the following:
$$\begin{array}{l} 
g\left(\left(m_i\right)_{i\in I}\right) = g\left(\left(n_i\right)_{i \in I}\right) \iff \\
g\left(\left(\bigvee_{j_i \in J_i} \Sigma_{j_i} \tensor_{{\Sfm}_i} \Phi_{j_i}\right)_{i\in I}\right) = g\left(\left(\bigvee_{k_i \in K_i} \Xi_{k_i} \tensor_{{\Sfm}_i} \Psi_{k_i}\right)_{i \in I}\right) \iff \\
\bigvee_{i \in I} \left(\bigvee_{j_i \in J_i} \Sigma_{j_i} \cdot e_i\left(\Phi_{j_i}\right)\right) = \bigvee_{i \in I} \left(\bigvee_{k_i \in K_i} \Xi_{k_i} \cdot e_i\left(\Psi_{k_i}\right)\right) \iff \\
\bigvee_{i \in I} \left(\bigvee_{j_i \in J_i} \Sigma_{j_i} \cdot \Phi_{j_i}\right) = \bigvee_{i \in I} \left(\bigvee_{k_i \in K_i} \Xi_{k_i} \cdot \Psi_{k_i}\right) \iff \\
\bigvee_{i \in I} \left(\bigvee_{j_i \in J_i} \Sigma_{j_i} \cdot \Phi_{j_i}\right) \dashv\vdash \bigvee_{i \in I} \left(\bigvee_{k_i \in K_i} \Xi_{k_i} \cdot \Psi_{k_i}\right).
\end{array}$$
Now, let $\Gamma = \bigvee_{i \in I} \left(\bigvee_{j_i \in J_i} \Sigma_{j_i} \cdot \Phi_{j_i}\right)$, $\Delta = \bigvee_{i \in I} \left(\bigvee_{k_i \in K_i} \Xi_{k_i} \cdot \Psi_{k_i}\right)$, and recall that, by \cite[Section 5]{ruslu}, $\vdash$ is the smallest $\sfm$-invariant consequence relation on $E$ verifying all the axioms and rules of all the $\vdash_i$'s. Therefore, there exists a $\vdash$-proof, i.e., a finite sequence of entailments, of each element of $\Delta$ from $\Gamma$ and vice-versa. For the transitivity of the consequence relations, given $\Gamma_0 = \ax \cup \bigcup_{i \in I} \left(\bigcup_{j_i \in J_i} \Sigma_{j_i} \Phi_{j_i}\right)$, there exists also a $\vdash$-proof of $\Delta$ from $\Gamma_0$. Let $\delta \in \Delta$ and $\Gamma_0 \vdash \Gamma_1 \vdash \ldots \vdash \Gamma_n \supseteq \{\delta\}$ be a proof of $\delta$ from $\Gamma_0$  --- so, each element of $\Gamma_{a+1}$ is directly derivable from $\Gamma_a$ --- such that, for all $a \in \{0, \ldots, n-1\}$, $\Gamma_a \subseteq \Gamma_{a+1}$.\footnote{This assumption does not affect the argument, since we can always substitute a set in the sequence by its union with the previous set.} We shall now show by induction that such a proof can be interpreted in $\vdash'$ via the translation $t$.

\begin{itemize}
\item Induction basis: \emph{For every element $\alpha$ of $\Gamma_0$, there exist $i \in I$, $\phi \in D_i$, and $\sigma \in \sfm$ such that $\alpha = \sigma(\phi)$ and $t(\sigma)(f_i(\phi))$ is directly derivable from $g'((m_i)_{i \in I})$ in $\vdash'$.}

Indeed, for all $\alpha \in \Gamma_0$, there exists $i \in I$ such that either $\alpha = \phi \in \ax_i$ or $\alpha = \sigma(\phi)$ with $\sigma \in \Sigma_{j_i}$ and $\phi \in \Phi_{j_i} \subseteq D_i$ for some $j_i \in J_i$. In the former case, $\vdash' f_i(\alpha)=t(\sigma)(f_i(\phi))$, where $\sigma = \id$, while in the latter $t(\sigma)(f_i(\phi)) \in g'((m_i)_{i \in I}$. In both cases, $t(\sigma)(f_i(\phi))$ is directly derivable from $g'((m_i)_{i \in I})$.

\item Induction step: \emph{For all $a \in \{0, \ldots, n-1\}$ and $\alpha \in \Gamma_{a+1}$, there exists $i \in I$, $\Sigma \in \Sfm$, $\Theta \subseteq \ax_i \cup (\Gamma_a \cap \wp D_i)$ and $\phi \in D_i$ such that $\alpha \in \Sigma \cdot \{\phi\}$ and $t(\Sigma)\cdot f_i(\Theta) \vdash' t(\Sigma) \cdot f_i(\phi)$.}

In other words, we will now prove that every ``global'' deduction, that is, every deduction in $\vdash$, can be traced back to a ``local'' one in some $\vdash_i$, and therefore interpreted in $\vdash'$ by means of $t$ and $f_i$.

Each element $\alpha$ of $\Gamma_{a+1}$ is a consequence of $\Gamma_a$. Therefore, there exists $i \in I$ such that either $\alpha = \phi$ is an axiom of $\vdash_i$ or there exist $\Theta \subseteq \Gamma_a \cap \wp D_i$, $\Lambda \subseteq D_i$, and $\Sigma \in \Sfm$ such that $\frac{\Theta}{\Lambda}$ is an inference rule of $\vdash_i$ and $\alpha = \sigma(\phi) \in \Sigma \cdot \Lambda$. In both cases, $t(\Sigma) \cdot f_i(\Theta) \vdash' t(\sigma)(f_i(\phi))$.
\end{itemize}
Since
$$\begin{array}{l}
g'((m_i)_{i \in I}) = \bigvee_{i \in I} \left(\bigvee_{j_i \in J_i} t(\Sigma_{j_i}) \cdot f_i(\Phi_{j_i})\right), \text{ and} \\
g'((n_i)_{i \in I}) = \bigvee_{i \in I} \left(\bigvee_{k_i \in K_i} t(\Xi_{k_i}) \cdot f_i(\Psi_{j_i})\right),
\end{array}$$
from the steps above it follows $g'((n_i)_{i \in I}) \subseteq g'((m_i)_{i \in I})$. The converse inclusion is completely analogous, hence we have $g'((m_i)_{i \in I}) = g'((n_i)_{i \in I})$ and, therefore, $((m_i)_{i \in I},(n_i)_{i \in I}) \in \ker g'$.

Last, from $\ker g \subseteq \ker g'$, we get that there exists an $\Sfm$-module morphism $f: \th \to \th_t'$ which makes the whole diagram in (\ref{coprdiag}) commutative. This completes the proof.
\end{proof}

In Theorems \ref{qmcoprod} and \ref{ds0copr} we described the coproducts in $\QM$ and $\DS_0$ respectively. In the next example, we will show that the coproduct of a family of objects of $\DS_0$ in $\QM$ need not be isomorphic to the coproduct of the same family in $\DS_0$.
\begin{exm}\label{exmcopr}
Let us consider the language $\lang = (\{\to,\neg\},(2,1))$ and let us consider the deductive system of Classical Propositional Logic on $\lang$. Now consider the coproducts in $\QM$ and $\DS_0$ of two copies of such a system. In order to make the disjoint union of two copies of the language and, therefore, the coproduct of two copies of the quantale of substitutions, let us denote them by $\lang_1 = (\{\to_1,\neg_1\},(2,1))$ and $\lang_2 = (\{\to_2,\neg_2\},(2,1))$. Consequently, the two systems shall be denoted by $({\Sfm}_i,\th_i)$, $i =1,2$.

Theorem \ref{qmcoprod} guarantees that the coproduct of the given pair of systems in $\QM$ is $(\Sfm,M)$, where $M = \coprod_{i=1}^2 \Sfm \tensor_{{\Sfm}_i} \th_i$, and let us denote by $(\Sfm, \th)$ the coproduct of the two systems in $\DS_0$, and by $\gamma$ the nucleus associated to its consequence relation. By \cite[Theorem 5.3]{ruslu} and Theorem \ref{qmcoprod}, there exist ${\Sfm}_i$-module embeddings $e_i: \th_i \to \th$ and $g_i: \th_i \to M$, and a unique $\Sfm$-module morphism $g: M \to \th$ such that $g\circ g_i = e_i$, for $i =1, 2$, as in the diagram (\ref{coprdiag}). We will show that such a morphism $g$ is surjective but not injective. Surjectivity is actually an immediate consequence of \cite[Theorem 5.4]{ruslu} (see also diagram (7) of the same paper).

By \cite[Theorem 5.4]{ruslu}, for all $i \in I$, $\Sfm \tensor_{{\Sfm}_i} \th_i$ is isomorphic to the lattice of theories $\th_i'$ of the deductive system defined on $E$ by means of the axioms and rules of $\vdash_i$. Let us denote by $\delta_i$ the nuclei such that $\th_i' = \wp E_{\delta_i}$, for $i =1,2$, and refer to the following commutative diagram, where $\alpha$ is the isomorphism as in \cite[Theorem 5.4]{ruslu}, and $g'\circ \alpha = g$.
$$\begin{tikzcd}
                                                                  &  & \th                                                                      &  &                                                                  \\
                                                                  &  & \th_1'\times\th_2' \arrow[d, "\alpha^{-1}", shift left] \arrow[u, "g'"'] &  &                                                                  \\
\th_1 \arrow[rr, "g_1"'] \arrow[rru, "g_1'"'] \arrow[rruu, "e_1"] &  & M \arrow[u, "\alpha", shift left]                                        &  & \th_2 \arrow[ll, "g_2"] \arrow[llu, "g_2'"] \arrow[lluu, "e_2"']
\end{tikzcd}$$
For $j \neq i$, and for any variable $x$, the formula $x \to_j x$ is a tautology of the $j$-th system and not one of the $i$-th system, therefore, $x \to_j x \in \delta_j(\varnothing)$ and $x \to_j x \notin \delta_i(\varnothing)$. It follows that $(\delta_1(\varnothing),\delta_2(\varnothing)) \neq (\delta_1(\{x \to_2 x\}), \delta_2(\varnothing))$. On the other hand, $x \to_1 x$ and $x \to_2 x$ are both tautologies in $\th$, hence $g'(\delta_1(\varnothing),\delta_2(\varnothing)) = g'(\delta_1(\{x \to_2 x\}),\delta_2(\varnothing)) = \gamma(\varnothing)$. It follows that $g(\delta_1(\varnothing),\delta_2(\varnothing)) = g(\delta_1(\{x \to_2 x\}),\delta_2(\varnothing)) = \gamma(\varnothing)$ and, therefore, $g$ is not injective.
\end{exm}

Regarding the so-called ``logical amalgamation" presented in \cite[Section 6]{ruslu}, the situation is slightly different. In order to discuss this point in detail, let us first introduce the pertinent notations. 

Let $\{\lang_i\}_{i=1}^2$ be propositional languages, and $S_i = (D_i, \vdash_i)$ be deductive systems, of the same fixed type, over $\lang_i$, with associated nuclei $\g_i$ and modules of theories $\th_i = (\wp D_i)_{\g_i}$, for $i=1,2$. Let also $\cat M$ be a common fragment of the $\lang_i$'s and $S = (C, \vdash_\beta)$ a deductive system on $\cat M$ whose domain is, again, of the same type of $D_1$ and $D_2$, with associated nucleus $\beta$ and module of theories $\th' = \wp C_\beta$, and such that there exist conservative interpretations $r_i: \th' \to \th_i$, $i=1,2$.

Last, with an abuse of notation, namely, identifying the languages with their sets of connectives, let 
$$\lang = ((\lang_1 \setminus \cat M) \sqcup (\lang_2 \setminus \cat M)) \cup \cat M,$$
$\sqcup$ denoting the disjoint union, so that $\cat M$ is now a fragment of $\lang$ too, and $\lang$ has no further overlapping with (the copies of) $\lang_1$ and $\lang_2$.


From \cite[Corollary 4.3]{ruslu} we know that $\Sfm$ is the amalgamated coproduct of $\Sfa$ and $\Sfb$ with respect to the common subquantale $\wp\Sigma_{\cat M}$. In a more general setting, as the one considered in \cite[Section 6]{ruslu}, in which $\cat M$ is not exactly a common fragment of the $\lang_i$'s but has a translation to each of them, we cannot guarantee that the V-formation of quantales of substitutions can be amalgamated, because the category of quantales does not have the amalgamation property. In fact, we cannot even guarantee the existence of all pushouts in $\DS_0$. Therefore, the construction of \cite[Section 6]{ruslu} is useful in such a general setting and in any concrete situation in which one may not want to identify connectives of the initial languages.

Corollary 4.3 of \cite{ruslu} and Theorem \ref{amalgth} provide the correct way to obtain the amalgamated coproduct in $\QM$ of the V-formation
$$(\Sfa,\th_1) \stackrel{(\iota_1,r_1)}{\longleftarrow} (\wp\Sigma_{\cat M},\th') \stackrel{(\iota_2,r_2)}{\longrightarrow} (\Sfb,\th_2),$$
where $\iota_1$ and $\iota_2$ are the quantale embeddings induced by the inclusions of $\cat M$ in the other two languages.

On the other hand, thanks to Theorem \ref{ds0copr}, in this specific case we shall be able to obtain an even more elegant amalgamation in $\DS_0$ than the one presented in \cite{ruslu}. Indeed, with the given notations, let us also denote by $e_i$, $i = 1,2$, the inclusion maps of $\Sfa$ and $\Sfb$ in $\Sfm$, and by $\iota_i$, $i=1,2$, the inclusion maps of $\wp\Sigma_{\cat M}$ in $\Sfa$ and $\Sfb$. Then we have
\begin{theorem}\label{ds0amalg}
The amalgamated coproduct of the V-formation
$$(\Sfa,\th_1) \stackrel{(\iota_1,r_1)}{\longleftarrow} (\wp\Sigma_{\cat M},\th') \stackrel{(\iota_2,r_2)}{\longrightarrow} (\Sfb,\th_2),$$
in $\DS_0$ is $(\Sfm, \th)$, where $\th$ is the module of theories of the $\lang$-system  whose consequence relation is determined by the union of the axioms and rules of $S_1$, $S_2$, and the set of rules
$$\Theta = \left\{\frac{e_ir_i(\{\phi\})}{e_kr_k(\{\phi\})} \bigg| \phi \in C, i \neq k \in \{1,2\}\right\}.$$
\end{theorem}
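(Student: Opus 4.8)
The plan is to verify the pushout universal property directly in $\DS_0$, leaning on the fact (Corollary 4.3 of \cite{ruslu}) that at the level of quantales $\Sfm$ is already the amalgamated coproduct of $\Sfa$ and $\Sfb$ over $\wp\Sigma_{\cat M}$, and then to produce the missing module morphism by a proof-theoretic induction in the spirit of the proof of Theorem \ref{ds0copr}. By construction $(\Sfm,\th)$ is an object of $\DS_0$, $\th$ being the module of theories of the $\lang$-system $(E,\vdash)$ defined by the displayed axioms and rules; write $\gamma$ for its nucleus. For $i=1,2$ let $\hat e_i\colon\th_i\to\th_{e_i}$ be the interpretation via $e_i$ sending a $\vdash_i$-theory to the $\gamma$-theory it generates in $E$, so that the candidate legs are the pairs $(e_i,\hat e_i)$. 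The first thing I would check is that the square commutes, i.e. $\hat e_1 r_1=\hat e_2 r_2$: this is precisely the purpose of the rules in $\Theta$, since for every $\phi\in C$ they make $e_1r_1(\{\phi\})$ and $e_2r_2(\{\phi\})$ $\vdash$-interderivable, hence equal in $\th$; as $r_i$ and $\hat e_i$ preserve joins and the theories generated by singletons join-generate $\th'$, this suffices.

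Next I would establish the universal property. Given $(S,N)\in\DS_0$, with $N=\th_{\vdash'}$ for some system $(D',\vdash')$, and morphisms $(l_1,g_1)\colon(\Sfa,\th_1)\to(S,N)$ and $(l_2,g_2)\colon(\Sfb,\th_2)\to(S,N)$ satisfying $(l_1,g_1)(\iota_1,r_1)=(l_2,g_2)(\iota_2,r_2)$, the quantale component is handled by Corollary 4.3 of \cite{ruslu}: since $l_1\iota_1=l_2\iota_2$, there is a unique quantale morphism $l\colon\Sfm\to S$ with $le_i=l_i$, and, arguing as in Theorem \ref{ds0copr} (the $l_i$ being quantale translations and $\lang$ the glued language), $l$ is again a quantale translation whose connective part is the common extension of those of $l_1$ and $l_2$. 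For the module component, by Theorem 7.1 of \cite{rusapal} it is enough to exhibit an interpretation $j\colon E\to\wp D'$ of $\vdash$ into $\vdash'$ via $l$ restricting to $g_1$ and $g_2$ on the two fragments; I would define $j$ by gluing the interpretations underlying $g_1$ and $g_2$, which is unambiguous on $\cat M$-formulas exactly because $g_1r_1=g_2r_2$, and then verify that $j$ carries every rule of $\vdash$ to a valid $\vdash'$-entailment. The rules of $S_1$ and $S_2$ are discharged by the hypothesis that $g_1$ and $g_2$ are interpretations, and the bridge rules $\Theta$ by the same compatibility $g_1r_1=g_2r_2$, which forces $j[e_ir_i(\{\phi\})]$ and $j(e_kr_k(\{\phi\}))$ to be $\vdash'$-interderivable. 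The induced $\Sfm$-module morphism $g\colon\th\to N_l$ then satisfies $g\hat e_i=g_i$, and is unique because $\hat e_1[\th_1]$ and $\hat e_2[\th_2]$ join-generate $\th$.

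Finally, to know that the pushout is genuinely an amalgamated coproduct I would check that the legs are embeddings. The $e_i$ embed by Corollary 4.3 of \cite{ruslu}, so what remains is injectivity of each $\hat e_i$, i.e. that extending $\vdash_i$ to $\vdash$ is conservative over $\lang_i$-formulas. I expect this to be the main obstacle. The plan is a \emph{purification} argument: any $\vdash$-derivation whose endpoints are $\lang_i$-formulas should be rewritable into one using only the rules of $S_i$, the sole foreign ingredient being applications of $\Theta$ that replace $e_ir_i(\{\phi\})$ by $e_kr_k(\{\phi\})$; here the conservativity of the interpretations $r_1,r_2$ over the common fragment $\cat M$ is what should allow such round-trip detours to be reabsorbed without generating new interderivabilities among $\lang_i$-formulas. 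This conservativity step, together with the bookkeeping needed to confirm that the glued map $j$ is action-invariant and that it respects $\Theta$, is the delicate part of the argument; it refines, but is more intricate than, the kernel inclusion $\ker g\subseteq\ker g'$ established in the proof of Theorem \ref{ds0copr}.
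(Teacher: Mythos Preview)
Your route differs from the paper's. The paper's proof is two lines: Corollary~4.3 of \cite{ruslu} supplies the amalgamated quantale square, and then the conclusion is read off from Theorem~\ref{amalgth} together with Theorems~6.4, 6.5, and 6.6 of \cite{ruslu}. In other words, the paper never verifies the $\DS_0$ universal property by hand; it passes through the $\QM$ amalgam of Theorem~\ref{amalgth} and lets the cited results from \cite{ruslu} do both the identification of that amalgam with the system $(E,\vdash)$ described in the statement and the conservativity that upgrades the pushout to an amalgam.

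Your direct argument has a real gap at the module step. The phrase ``gluing the interpretations underlying $g_1$ and $g_2$'' tacitly treats $E$ as $D_1 \cup D_2$ with overlap $C$, but this is false: $E$ contains formulas that mix connectives from $\lang_1 \setminus \cat M$ and $\lang_2 \setminus \cat M$, and neither underlying interpretation is defined on those. Even on $C$ itself, the hypothesis $g_1 r_1 = g_2 r_2$ is an equality in the theory module $N$, so it only says that the images generate the same $\vdash'$-theory, not that the two interpretations agree pointwise as maps into $\wp D'$; thus the glued $j$ is not well-defined as written, and its $l$-action-invariance on mixed formulas is not argued at all. The repair is exactly the manoeuvre used in Theorem~\ref{ds0copr}: route through the $\QM$ object (here the pushout of Theorem~\ref{amalgth}) and establish a kernel inclusion, rather than trying to build $j$ on $E$ directly. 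That is why the paper invokes Theorem~\ref{amalgth} instead.

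Your purification sketch for conservativity of $\hat e_i$ is the right intuition, but it is precisely the non-trivial content of \cite[Theorems~6.4--6.6]{ruslu}: one must show that the bridge rules $\Theta$, composed with the rules of $S_k$, cannot manufacture new derivabilities among pure $\lang_i$-formulas, and this uses the conservativity of the $r_i$ in an essential way. The paper does not reprove this; it cites it.
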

\begin{proof}
By \cite[Corollary 4.3]{ruslu}, 
$$
\begin{tikzcd}
                                       & \Sfa \arrow[rd, "e_1"]  &   \\
\wp\Sigma_{\cat M} \arrow[ru, "\iota_1"] \arrow[rd, "\iota_2"'] &                        & \Sfm \\
                                       & \Sfb \arrow[ru, "e_2"'] &  
\end{tikzcd}$$
is the diagram of an amalgamated coproduct of quantales. Then the assertion follows readily from Theorem \ref{amalgth} and \cite[Theorems 6.4, 6.5, and 6.6]{ruslu}.
\end{proof}

\bibliographystyle{plain}

\bibliography{C:\\Users\\ciror\\Dropbox\\Ciro\\ricerca\\cirobibtex}

\end{document}